\documentclass[psamsfonts]{amsart}
\usepackage[utf8]{inputenc}

%-------Packages---------
\usepackage{amssymb,amsfonts}
\usepackage{enumerate}
\usepackage{mathrsfs}
\usepackage{url}
\usepackage{mathtools}
\usepackage[font=small,labelfont=bf]{caption}
\usepackage[all,arc]{xy}
\usepackage{todonotes}
\usepackage{enumitem}

\usepackage{xcolor}
\usepackage{soul}

%--------Theorem Environments--------
%theoremstyle{plain} --- default
\newtheorem{thm}{Theorem}[section]
\newtheorem{cor}[thm]{Corollary}
\newtheorem{prop}[thm]{Proposition}

\newtheorem{lem}[thm]{Lemma}

\theoremstyle{definition}
\newtheorem{defn}[thm]{Definition}

\newtheorem{exmp}[thm]{Example}

\newtheorem{fact}[thm]{Fact}

\newtheorem{NotConv}[thm]{Notation and Conventions}

\theoremstyle{remark}
\newtheorem{rem}[thm]{Remark}

\makeatletter
\let\c@equation\c@thm
\makeatother
\numberwithin{equation}{section}

\def\Ind{\setbox0=\hbox{$x$}\kern\wd0\hbox to 0pt{\hss$\mid$\hss} \lower.9\ht0\hbox to 0pt{\hss$\smile$\hss}\kern\wd0} 

\def\Notind{\setbox0=\hbox{$x$}\kern\wd0\hbox to 0pt{\mathchardef \nn=12854\hss$\nn$\kern1.4\wd0\hss}\hbox to 0pt{\hss$\mid$\hss}\lower.9\ht0 \hbox to 0pt{\hss$\smile$\hss}\kern\wd0}

\newcommand{\gal}{\mathrm{Gal}}
\newcommand{\cals}{\mathcal{S}}
\newcommand{\call}{\mathcal{L}}
\newcommand\cald{\mathcal{D}}
\newcommand{\calk}{\mathcal{K}}

\newcommand{\si}{\sigma}
\newcommand{\fix}{\mathrm{Fix}}
\newcommand{\inv}{^{-1}}
\newcommand{\tp}{\mathrm{tp}}

\newcommand{\nat}{\mathbb{N}}
\newcommand{\rat}{\mathbb{Q}}
\newcommand{\rest}{{\lower       .25     em      \hbox{$\vert$}}}

\newcommand{\vlabel}{\label}

\bibliographystyle{alpha}

%--------Meta Data: Fill in your info------
\title{Measures on Bounded Perfect PAC Fields}

\author{Zo\'e Chatzidakis}
\email{zoe.chatzidakis@imj-prg.fr}

\author{Nicholas Ramsey}
%\address{Department of Mathematics, University of Notre Dame, 255 Hurley, Notre Dame, IN 46556}
\email{sramsey5@nd.edu}

\thanks{NR was supported by NSF grant DMS-2246992.  }

\date{\today}

%\date{\today}

\begin{document}

\begin{abstract}
We describe a construction for producing Keisler measures on bounded perfect PAC fields. As a corollary, we deduce that all groups definable in bounded perfect PAC fields, and even in unbounded perfect Frobenius fields, are definably amenable. This work builds on our earlier constructions of measures for $e$-free PAC fields and a related construction due to Will Johnson. 
\end{abstract}

\maketitle
\setcounter{tocdepth}{1}
\tableofcontents

\section{Introduction}

In an earlier paper \cite{ChRa}, we showed how  to construct measures on definable subsets of perfect PAC
fields whose absolute Galois group is  $e$-free profinite or 
$e$-free pro-$p$. In this paper, we extend the result to small
projective absolute
Galois groups, as well as to some unbounded projective Galois groups,
such as the superprojective ones.  The measures we construct are typically
\emph{not} invariant under definable bijection, but we show that
birational isomorphisms preserve them.  Together with
classification results of Hrushovski and Pillay \cite{HrP} on groups definable in
bounded PAC fields, this allows us to deduce that all definable groups
in these fields are definably amenable, meaning they carry a translation
invariant probability measure on their definable subsets. Via an
ultralimit argument, we conclude that all groups  definable in
perfect Frobenius fields are definably amenable as well.  

The question of whether or not groups definable in bounded PAC fields are
definably amenable arose in connection with the broader question of
whether \emph{all} groups in simple theories are definably amenable. The
standard examples of groups with a simple theory tend to be definably
amenable, which can usually be seen from very general considerations.
The class of definably amenable groups contains the finite, stable and
amenable groups, and is closed under ultraproducts, which entails that
groups definable in pseudo-finite fields (which are definably amenable because pseudo-finite),
groups definable in ACFA (which are definably amenable because pseudo-stable \cite{H04}\cite{SV}),
and the extra special $p$-groups (which are definably amenable
both because nilpotent and because pseudo-finite) are definably amenable. However, starting with
Hrushovski's \cite{H05}, bounded PAC fields have served as a major class of
examples of simple theories.  Although pseudo-finite fields are examples
of bounded PAC fields, the class of such fields is considerably broader and it can be
shown that bounded PAC fields which are not pseudo-finite are not
elementarily equivalent to an ultraproduct of stable fields either
{\cite{S}} (see also (5) in Concluding Remarks).  This suggests that in order to show that groups
definable in these fields are definably amenable, {these general considerations do not suffice and }a different, {more specific} sort of argument is required. 

Although we now know there are examples of groups with a simple theory
that are not definably amenable \cite{CHKKMPR}, the issue of definable
amenability for groups in bounded PAC fields is a good test question for
our understanding of the model theory of these fields. Our construction
of translation invariant measures relies on quantifier simplification
results, a description of definable groups, and the model theory of the
inverse system of the absolute Galois groups of these fields. Moreover,
our analysis extends to perfect Frobenius fields, which include some 
unbounded PAC fields.  Unbounded PAC fields never have simple theory \cite{C-unbounded},
but perfect Frobenius fields are important examples within the
broader class of NSOP$_{1}$ theories.  Our measures provide new tools
for the analysis of groups in this setting. One should note that
since any graph is interpretable in some perfect PAC field \cite[Chapter
28, \S\S 7 -- 10]{FJ}, there are
examples of perfect PAC fields with non-definably amenable definable groups.

To construct measures on all bounded perfect PAC fields, we begin by
constructing them on a distinguished subclass, consisting of the perfect
PAC fields whose absolute Galois group is the universal Frattini cover
of a finite group, by adapting  a Markov chain technique introduced into
model theory by Johnson in \cite{Jo}. We then  show that the same result can be extended to bounded
perfect PAC fields and to perfect Frobenius fields, again
 an ultralimit argument.  Finally, we show that if $G$ is a finite group, then sets definable in an existentially closed $G$-field, also have a measure, and that groups definable in those fields are therefore definably amenable.

\section{Preliminaries}
%\subsection*{Definitions} 

%\red{\bf I would call it a section, since we have tons of things to recall}.
\begin{NotConv} We will always work in a large algebraically closed
  field $\Omega$. If $K$ is a field, we denote by $K^s$ its separable
  closure, by $K^{alg}$ its algebraic closure, and by $\gal(K)$ its
  absolute Galois group, $\gal(K^s/K)$. \\
   Throughout this paper, all homomorphisms between profinite groups will
  be {\em continuous}. If $G$ is a profinite group, then ${\rm Im}(G)$ denotes
  the set of (isomorphism classes of) finite (continuous) images of
  $G$. 
  \end{NotConv}
\begin{defn}
A field $K$ is called \emph{pseudo-algebraically closed (PAC)} if every absolutely irreducible variety defined over $K$ has a $K$-rational point. 
\end{defn}

\begin{defn} \vlabel{proj-EP}
Let $G$ be a profinite group.
\begin{enumerate}
\item We say $G$ is \emph{projective} if whenever $\alpha: G \to A$ and $\beta : B \to A$ are  epimorphisms, where $A$ and $B$ are finite groups, then there is a  homomorphism $\gamma :G \to B$ so that $\beta \circ \gamma = \alpha$ as in the following diagram:
$$
\xymatrix{ & G\ar@{-->}[dl]  \ar@{->>}[d] \\
B \ar@{->>}[r] & A }
$$
\item We say $G$ has the \emph{embedding property} (also known as the
  \emph{Iwasawa property}), if whenever $\alpha: G
  \to A$ and $\beta : B \to A$ are  epimorphisms and $B \in \mathrm{Im}(G)$, then there is a  epimorphism $\gamma :G \to B$ so that $\beta \circ \gamma = \alpha$ as in the following diagram:
$$
\xymatrix{ & G\ar@{-->>}[dl]  \ar@{->>}[d] \\
B \ar@{->>}[r] & A }
$$
\item The profinite group $G$ is called \emph{superprojective} if it is
  projective and has the embedding property.
  \item The profinite group $G$ is \emph{small} if for every $n$, it has
    only finitely many (continuous) quotients of size $\leq n$. 
  \item The field $K$ is a \emph{Frobenius field} if it is PAC, and
    $\gal(K)$ is superprojective.
    \item The field $K$ is \emph{bounded} if $\gal(K)$ is
      small. Equivalently, if for any $n$, the field $K$ has only finitely many
     Galois extensions of degree $\leq n$. 
\end{enumerate}
\end{defn}

\begin{fact}\vlabel{fact1} \begin{enumerate}
\item Assume that $K\equiv L$ and that $\gal(K)$ is small. Then
  $\gal(L)\simeq \gal(K)$ (Klingen, \cite[Proposition 20.4.5]{FJ}). This
  follows from the following result: if $G$ and $H$ are profinite
  groups, with ${\rm Im}(G)={\rm Im}(H)$ and $G$ small, then $G\simeq
  H$ (\cite[Proposition 16.10.7]{FJ}); note also that if ${\rm
    Im}(H)\subseteq {\rm Im}(G)$, then there is an epimorphism $G\to H$
  (loc. cit.). 
    \item The absolute Galois group of a PAC field is projective (Ax,
  see
  e.g. \cite[Theorem 11.6.2]{FJ}).
\item Moreover, every projective profinite
  group is the absolute Galois group of some PAC field
  (Lubotzky-v.d.Dries, see e.g. \cite[Corollary 23.1.2]{FJ}).

%\red{maybe put these two paragraphs below somewhere else?}
\item The free profinite group on countably many generators $\hat{F}_{\omega}$
has the embedding property so $\omega$-free PAC fields are Frobenius
fields.  However, there are many others\textemdash see, e.g.,
\cite[Sections 24.6 -- 24.9]{FJ}.
\end{enumerate}
\end{fact}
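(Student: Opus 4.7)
The four parts are all classical, so for each I would indicate the main idea rather than reconstruct the argument in full; the details live in Fried--Jarden.

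For (1), the model-theoretic statement reduces to the group-theoretic claim as follows: for each finite group $A$, the assertion ``$K$ has a Galois extension with group isomorphic to $A$'' is first-order expressible in the language of rings via coefficients and splitting behavior of polynomials, so $K\equiv L$ forces $\mathrm{Im}(\gal(K))=\mathrm{Im}(\gal(L))$. To prove the group-theoretic statement itself, I would construct an isomorphism as an inverse limit of compatible epimorphisms between finite quotients: smallness of $G$ makes the tree of partial compatible choices finitely branching at each level (only finitely many quotients of $G$ of size $\leq n$), so a K\"onig-style compactness argument delivers the limit. The epimorphism statement follows from the same construction with the branching condition relaxed.

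For (2), Ax's projectivity theorem, the plan is: given $\alpha\colon\gal(K)\twoheadrightarrow A$ with fixed field $L$ of $\ker\alpha$ and $\beta\colon B\twoheadrightarrow A$, construct an absolutely irreducible $K$-variety whose $K$-rational points parametrize Galois extensions $M/K$ with group isomorphic to $B$ lifting $L/K$ compatibly with $\beta$; the PAC hypothesis then supplies a rational point and hence the desired lift $\gamma$. For (3), the Lubotzky--van den Dries converse, I would first realize the given projective $G$ as the Galois group of some field $K_0$ (for instance via a suitable $G$-action on a large algebraically closed field and passing to fixed fields) and then iteratively form a PAC closure by adjoining rational points to every absolutely irreducible variety over the current field, using projectivity of $G$ to guarantee that no new algebraic extensions appear at any step of the tower.

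For (4), the embedding property of $\hat{F}_\omega$: given $\alpha,\beta$ as in the definition and a free generating system $(x_i)_{i\in\omega}$ of $\hat{F}_\omega$, I would lift each $\alpha(x_i)$ to some $b_i\in B$ with $\beta(b_i)=\alpha(x_i)$; freeness then yields a (continuous) homomorphism $\gamma\colon\hat{F}_\omega\to B$ with $\beta\circ\gamma=\alpha$, and since $B$ is finite while we have countably many $b_i$ at our disposal, we can arrange in advance that a generating set of $B$ appears among them, forcing surjectivity. The main obstacle among the four claims is (3): verifying that the iterated PAC construction terminates with Galois group exactly $G$ requires careful bookkeeping and an essential recursive use of projectivity at every finite level, which is why it is usually handled via the detailed embedding-problem machinery in Fried--Jarden rather than in a few lines.
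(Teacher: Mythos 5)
This Fact is stated in the paper without proof (it is a list of citations to Fried--Jarden), so the comparison is with the standard arguments you are sketching. Your outlines of (1), (2) and (4) are essentially those arguments: for (1) the elementarity of ``$K$ has a Galois extension with group $A$'' plus the inverse-limit/K\"onig compactness argument over the finite, nonempty sets of epimorphisms onto finite quotients is exactly how smallness is used; for (4) the only imprecision is that the auxiliary elements you insert sit at positions $i$ where $\alpha(x_i)=1$ (all but finitely many, by continuity), so they are forced to lie in $\ker\beta$: what you can arrange is that the $b_i$ include a generating set of $\ker\beta$, which together with the lifts of the nontrivial $\alpha(x_i)$ generates $B$; one cannot literally plant an arbitrary generating set of $B$ among the $b_i$.

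The genuine gap is in (3), in both halves of your plan. First, realizing $G$ as $\gal(K_0)$ ``via a $G$-action on a large algebraically closed field and passing to fixed fields'' does not work: by Artin--Schreier no finite group of order $>2$ acts on an algebraically closed field with the fixed field having that Galois group, and in general the fixed field of such an action will not have absolute Galois group $G$. The standard first step is instead to use projectivity to split an epimorphism $\hat{F}_m\twoheadrightarrow G$ (lift $\mathrm{id}_G$ through it), embedding $G$ as a closed subgroup of a free profinite group which is known to be an absolute Galois group, and then take the fixed field inside the separable closure. Second, and more seriously, projectivity is not what ``guarantees that no new algebraic extensions appear'' in the tower: adjoining generic points of absolutely irreducible varieties is automatically a regular extension, so the base stays relatively algebraically closed with no hypothesis on $G$ at all. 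The real difficulty with the naive tower is that the absolute Galois group of the union $M$ is much larger than $G$; the restriction $\gal(M)\to\gal(K_0)\simeq G$ is onto but far from injective, so your construction as described terminates with a PAC field whose Galois group merely surjects onto $G$. Projectivity is used precisely to repair this at the end (or at each stage in a more careful bookkeeping): it provides a section $G\to\gal(M)$ of that epimorphism, and one passes to the fixed field $N$ of the image of the section; since every algebraic extension of a PAC field is PAC, $N$ is PAC with $\gal(N)\simeq G$. Without this splitting-and-fixed-field step the argument does not yield the Lubotzky--van den Dries theorem.
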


%% This fact shows that there are fields satisfying the hypotheses of
%% our theorem. 

The following is an important tool in our proof.

\begin{defn}\vlabel{Frattini} Let $G$ be a profinite group.
  \begin{enumerate}
\item
   The \emph{Frattini subgroup} $\Phi(G) \leq G$ is the intersection of
   all maximal open subgroups of $G$.  If $G$ is profinite, it is
   pro-nilpotent. 
\item  (\cite[Definition 22.5.1]{FJ}) %{fried2008field})
A homomorphism $\varphi: H \to G$ is called a \emph{Frattini cover} if one of the following equivalent conditions holds:
\begin{itemize}
\item[(a)] $\varphi$ is surjective and %$\text {ker}
  $\ker(\varphi) \leq \Phi(H)$.
\item [(b)] A closed subgroup $H_{0} \leq H$ is equal to $H$ if and only if $\varphi(H_{0}) = G$.
\item [(c)] A subset $S \subseteq H$ generates $H$ if and only if $\varphi(S)$
  generates $G$.
\end{itemize}
\item A Frattini cover $\alpha: H\to G$ is called \emph{universal} if whenever
  $\beta: H'\to G$ is another Frattini cover of $G$, there exists an
  epimorphism $\gamma:H\to H'$ such that $\beta\circ\gamma =\alpha$, as
  in the following diagram:
  $$
\xymatrix{ & H\ar@{-->>}[dl]  \ar@{->>}[d] \\
H' \ar@{->>}[r] & G }
$$
\end{enumerate}
\end{defn}

\begin{fact}\vlabel{fact2} Let $G$ be a profinite group.
  \begin{enumerate}
    \item
  The universal Frattini cover of $G$ exists, and is denoted $\tilde
  G$ (\cite[Proposition 22.6.1]{FJ}). It is a projective group, and the kernel of the
  map $\tilde G\to G$ is pro-nilpotent, with $\Phi(\tilde G)$ projecting
  onto $\Phi(G)$. For additional properties, see
  \cite[Subsection 22.6]{FJ}.
  \item \vlabel{easy fact} 
  Suppose $\varphi: A \to B$ and $\psi : B
      \to C$ are epimorphisms of profinite groups. Then  $\psi \circ
      \varphi$ is a Frattini cover if and only if $\varphi$ and
        $\psi$ are 
      Frattini covers. 
      \item Observe that if $G$ is finitely generated, and $H\to G$ is a
        Frattini 
        cover, then any system of generators of $G$ lifts to a system of
        generators of $H$. This follows from Definition \ref{Frattini}(c). 

\item \cite[Lemma 22.6.3]{FJ} \vlabel{frat fact}
Let $\tilde{\varphi}: \tilde{G} \to G$ be the universal Frattini cover of a profinite group $G$. Then a profinite group $H$ is a quotient of $\tilde{G}$ if and only if $H$ is a Frattini cover of a quotient of $G$. 
        
  \item Assume that $k_1$ is a Galois extension of the field $k$, and that
    $\gal(k)$ is a Frattini cover of $\gal(k_1/k)$. Let $K$ be a separable 
    extension of $k$. The following
    conditions are equivalent (by easy Galois theory and the definition
    of a Frattini cover):
    \begin{itemize}
    \item[(a)] $K$ is a regular extension of $k$.
      \item[(b)] $\gal(K)$ projects onto $\gal(k_1/k)$ (via the restriction
        map).
      \item[(c)] $K\cap k_1=k$.
        \item[(d)] If $k_0$ is the subfield of $k_1$ fixed by $\Phi(\gal(k_1/k))$,
          then $K\cap k_0=k$.
        \end{itemize}
  \end{enumerate}
  \end{fact}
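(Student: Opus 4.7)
The plan is to reduce each equivalence to either a standard Galois correspondence or to one of the equivalent characterizations of a Frattini cover from Definition~\ref{Frattini}(2). Set $G := \gal(k_1/k)$ and let $\pi : \gal(k) \twoheadrightarrow G$ denote the restriction map, a Frattini cover by hypothesis. Since every element of $k^s$ is separable over $k$ and a fortiori over $K \supseteq k$, we have $k^s \subseteq K^s$ inside $\Omega$, so restriction yields a continuous homomorphism $\rho : \gal(K) \to \gal(k)$, and $\pi \circ \rho$ is exactly the restriction map $\gal(K) \to G$ appearing in (b).

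For (a)~$\Leftrightarrow$~(b) I would argue as follows. By the Galois correspondence for $k^s/k$, the image of $\rho$ is the closed subgroup $\gal(k^s/(K \cap k^s)) \leq \gal(k)$, so $\rho$ is surjective iff $K \cap k^s = k$, which (since $K/k$ is separable) says precisely that $K/k$ is regular. Setting $H_{0} := \mathrm{Im}(\rho)$, condition (b) asserts that $\pi(H_{0}) = G$, and Definition~\ref{Frattini}(2)(b) applied to the Frattini cover $\pi$ says that a closed subgroup $H_{0} \leq \gal(k)$ equals $\gal(k)$ if and only if $\pi(H_{0}) = G$. The equivalence of (a) and (b) follows. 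Meanwhile (b)~$\Leftrightarrow$~(c) is the Galois correspondence for $k_{1}/k$: the image of $\pi \circ \rho$ is $\gal(k_{1}/(K \cap k_{1}))$ (any element of $\gal(k_{1}/(K \cap k_{1}))$ lifts to $\gal(K \cdot k_{1}/K) \subseteq \gal(K)$), so surjectivity is equivalent to $K \cap k_{1} = k$.

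It remains to treat (c)~$\Leftrightarrow$~(d). Since $k_{0} \subseteq k_{1}$, (c) immediately implies (d). Conversely, let $H := \gal(k_{1}/(K \cap k_{1})) \leq G$; under the quotient $G \twoheadrightarrow G/\Phi(G) = \gal(k_{0}/k)$, the image of $H$ corresponds via Galois theory to the subfield $K \cap k_{0}$, so (d) translates to $H \cdot \Phi(G) = G$. The standard non-generator property of the Frattini subgroup of a profinite group then forces $H = G$, i.e.\ $K \cap k_{1} = k$. There is no serious obstacle, the main effort being to keep straight which Galois correspondence is in play at each step; the only genuinely non-trivial inputs are the Frattini-cover characterization for (a)~$\Leftrightarrow$~(b) and the non-generator property of $\Phi(G)$ for (c)~$\Leftrightarrow$~(d).
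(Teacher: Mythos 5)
Your argument for item (5) is correct and is exactly the "easy Galois theory plus the definition of a Frattini cover" route the paper has in mind: you identify the image of the restriction maps via the Galois correspondence, use characterization (b) of Definition~\ref{Frattini}(2) for (a)$\Leftrightarrow$(b), and the non-generator property of $\Phi(\gal(k_1/k))$ for (c)$\Leftrightarrow$(d). The remaining items (1)--(4) are literature citations (Fried--Jarden) or immediate from Definition~\ref{Frattini}(c), so omitting them is appropriate.
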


%\red{I moved your easy fact}

\subsection*{Quantifier-elimination down to test formulas}
We will write $\mathcal{L}=\{+,-,\cdot,0,1\}$ for the language of rings and, given a field $E$, we write $\mathcal{L}(E)$ for the language $\mathcal{L}$ together with constant symbols for the elements of $E$. 

The following is \cite[Theorem 20.3.3]{FJ}:

\begin{fact} \vlabel{elementary equivalence theorem}
    Let $E/L$ and $F/M$ be separable field extensions in which both $L$ and $M$ contain a common subfield $K$. Suppose $E$ and $F$ are both PAC fields of the same imperfection degree. In addition, suppose there is a $K^{s}$-isomorphism $\Phi_{0}: L^{s} \to M^{s}$ such that $\Phi_{0}(L) = M$ and there is a commutative diagram 
    $$
\xymatrix{ 
\gal(F) \ar@{->}[r]^{\varphi} \ar@{->}[d]^{\mathrm{res}} & \gal(E) \ar@{->}[d]^{\mathrm{res}} \\
\gal(M) \ar@{->}[r]^{\varphi_{0}} & \gal(L),
}
$$
where $\varphi_{0}$ is dual to $\Phi_{0}$. Then $E \equiv_{K} F$. 
\end{fact}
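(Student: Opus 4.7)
The strategy is the standard one for PAC fields: construct a common elementary extension of $E$ and $F$, or equivalently, embed $E$ elementarily into a sufficiently saturated elementary extension of $F$, using the given Galois-theoretic data to guide the construction. The crucial ingredient is the \emph{elementary substructure criterion for PAC fields}: if $E_{0}\subseteq E_{1}$ are both PAC of the same imperfection degree, $E_{1}/E_{0}$ is a regular field extension, and the restriction map $\gal(E_{1})\to \gal(E_{0})$ is an isomorphism, then $E_{0} \prec E_{1}$. This is a companion result in Fried--Jarden typically established just before the statement at hand.

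With this in hand, I would proceed as follows. Let $F^{*}$ be an $|E|^{+}$-saturated elementary extension of $F$, and identify $L$ with $M$ via $\Phi_{0}$. The goal is to produce a $K$-embedding $\iota: E \hookrightarrow F^{*}$ together with an extension $\tilde\iota: E^{s} \to (F^{*})^{s}$ of $\Phi_{0}$ such that the induced arrow $\gal(F^{*})\to \gal(\iota(E))$ on absolute Galois groups agrees, via $\tilde\iota$, with $\varphi$ composed with the natural restriction $\gal(F^{*})\to \gal(F)$. By the standard duality between finite separable extensions and finite quotients of absolute Galois groups, the existence of $\iota$ reduces to a compatibility statement on the commutative square in the hypothesis, which holds by assumption; saturation of $F^{*}$ then converts this Galois-theoretic compatibility into an actual realisation in the field. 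One next enlarges $F^{*}$ further (again by saturation) so that $F^{*}$ is regular over $\iota(E)$ and the restriction $\gal(F^{*}) \to \gal(\iota(E))$ is an isomorphism; here projectivity of $\gal(\iota(E)) \cong \gal(E)$ (Fact~\ref{fact1}) produces the required section, and the PAC property of $F^{*}$ realises it geometrically. Applying the substructure criterion, $\iota(E) \prec F^{*}$, whence $E \equiv_{K} \iota(E) \equiv_{K} F^{*} \equiv_{K} F$.

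The main obstacle is the construction of $\iota$ in the second paragraph. The hypothesised commutative square is precisely what is needed to ensure consistency of the quantifier-free type that $\iota$ must realise, but writing down this type and verifying its consistency requires careful bookkeeping of the interplay between absolute Galois groups, separable closures, and the identification of $L$ with $M$ via $\Phi_{0}$. Once that bookkeeping is done, the remaining steps are comparatively routine applications of projectivity and saturation.
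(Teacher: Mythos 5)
Your proposal has two genuine problems, and they interact. First, a point of logical structure: the paper does not prove this Fact at all; it is quoted verbatim from Fried--Jarden (Theorem 20.3.3), whose proof runs through the Embedding Lemma (which produces field embeddings with prescribed Galois-theoretic behaviour but \emph{no} elementarity) followed by a two-sided $\omega$-tower of ultrapowers of $E$ and $F$, interleaved so that the composite maps are the canonical diagonal elementary embeddings; elementary equivalence appears only in the direct limit. The ``elementary substructure criterion'' you lean on (a regular extension of PAC fields of the same imperfection degree with $\mathrm{res}:\gal(E_{1})\to\gal(E_{0})$ an isomorphism is elementary) is not ``established just before'' this statement in Fried--Jarden: it is the special case $K=L=M=E_{0}$, $\Phi_{0}=\mathrm{id}$, $\varphi=\mathrm{res}$ of the very theorem being proved, and is deduced \emph{from} Theorem 20.3.3. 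Using it as an input makes your argument circular; the hard content of the theorem is precisely the passage from embeddings to elementarity that the criterion encapsulates.

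Second, even granting the criterion, the pivotal step ``enlarge $F^{*}$ so that $\mathrm{res}:\gal(F^{*})\to\gal(\iota(E))$ is an isomorphism'' cannot be carried out in general. Such an isomorphism forces $\gal(F^{*})\cong\gal(E)$; but when $\gal(E)$ is not small (e.g.\ $E$ an $\omega$-free PAC field, a central case for this machinery), any $\aleph_{1}$-saturated $F^{*}\succcurlyeq F$ already has uncountably many Galois extensions with a fixed finite Galois group, since saturation realises the types asserting a ``new'' such extension over any countable family of them; hence no continuous surjection $\gal(F^{*})\to\gal(E)$ can be injective, and passing to a larger elementary extension only makes this worse. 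The repair you gesture at---projectivity gives a section $s$ of the restriction, which the PAC property ``realises geometrically''---amounts to replacing $F^{*}$ by the fixed field $F'$ of $s(\gal(\iota(E)))$, a proper separable algebraic extension of $F^{*}$. That $F'$ is indeed PAC with the desired Galois group, but it is no longer an elementary extension of $F$ (nor elementarily equivalent to it in general), so the chain $E\cong_{K}\iota(E)\preccurlyeq F'$ can no longer be connected back to $F$, and the proof collapses exactly at the point the theorem is supposed to address. This obstruction is why Fried--Jarden iterate embeddings of \emph{both} fields along an $\omega$-tower rather than performing a single embedding into one saturated model. (Your plan can be salvaged when $\gal(F)$ is small, since then $\mathrm{res}:\gal(F^{*})\to\gal(F)$ is an isomorphism, but the Fact is used in this paper well beyond that case; note also that, as in Fried--Jarden, $\varphi$ must be an isomorphism for the statement to hold, which your construction implicitly uses.)
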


\begin{defn}
Fix a field $E$.
\begin{itemize}
    \item A {\em test sentence over $E$} is a Boolean
combination of $\mathcal{L}(E)$-sentences of the form $\exists y\, f(y)=0$,
where $f(y)\in E[y]$ ($y$ a single variable). 
\item Similarly, we say that $\theta(x)$ is a {\em test formula
   over $E$} (in the tuple of variables $x$)  if it is a Boolean combination of $\mathcal{L}(E)$-formulas of the form $\exists y\, f(x,y)=0$, where $y$ is a
single variable, and $f\in E[x,y]$. 
\end{itemize}
\end{defn}

Note that in particular any quantifier-free
sentence (formula) of $\mathcal{L}(E)$ is a test sentence (formula).

\begin{fact} \cite[Lemma 20.6.3]{FJ}\vlabel{fact:test} Let $E$ and $F$ be extensions of the field $K$, and assume
  that they satisfy the same test sentences over $K$. Then $K^{alg}\cap
  E \simeq_K K^{alg}\cap F$. 
\end{fact}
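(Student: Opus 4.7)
The plan is to reduce the statement to a purely algebraic question about algebraic extensions of $K$, then prove it via a compactness (inverse-limit) argument together with the fact that every $K$-endomorphism of an algebraic extension is surjective.

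First I unpack the hypothesis. A test sentence over $K$ is a Boolean combination of sentences of the form $\exists y\, f(y)=0$ with $f\in K[y]$, so $E$ and $F$ agreeing on all test sentences over $K$ is equivalent to
\[
\{f\in K[y]: f \text{ has a root in } E\} \;=\; \{f\in K[y]: f \text{ has a root in } F\}.
\]
Since every root of such an $f$ is algebraic over $K$, this is the same as saying that the algebraic extensions $L:=K^{alg}\cap E$ and $M:=K^{alg}\cap F$ realize exactly the same irreducible polynomials of $K[y]$. Thus it suffices to prove the purely algebraic claim: if $L$ and $M$ are algebraic extensions of $K$ realizing the same irreducible polynomials of $K[y]$, then $L\simeq_K M$.

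Second, I construct a $K$-embedding $\sigma:L\to M$ by an inverse-limit argument. For each finite subextension $L_0\subseteq L$, let $S(L_0)$ be the set of $K$-embeddings $L_0\to M$; this set is finite, since each embedding is determined by the images of finitely many generators and each generator is constrained to be a root of its minimal polynomial over $K$. For non-emptiness: in the separable case one takes a primitive element $\gamma_0$ of $L_0$ and uses that $\min_K(\gamma_0)\in K[y]$ has a root in $L$, hence (by hypothesis) a root $\gamma_0'\in M$, yielding the embedding $\gamma_0\mapsto\gamma_0'$; in general, purely inseparable elements over $K$ are pinned down as the unique roots of their minimal polynomials in $K^{alg}$, so the hypothesis forces the purely inseparable part of $L$ to coincide with that of $M$ as subsets of $K^{alg}$, and one handles the separable and purely inseparable parts in turn. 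Directing the finite subextensions by inclusion with the restriction maps, we obtain an inverse system of non-empty finite sets, whose inverse limit is non-empty by Tychonoff's theorem; any element of the limit glues to the desired $K$-embedding $\sigma:L\to M$. By the symmetric argument, there is also a $K$-embedding $\tau:M\to L$.

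To conclude, I invoke the elementary fact that every $K$-endomorphism $\rho$ of an algebraic extension $N/K$ is surjective: for each $\alpha\in N$ with minimal polynomial $f$, $\rho$ sends roots of $f$ in $N$ to roots of $f$ in $N$, and since this set is finite and $\rho$ is injective on it, $\rho$ restricts to a bijection, so $\alpha$ lies in the image. Applying this to $\tau\circ\sigma:L\to L$ shows that $\tau\circ\sigma$ is surjective, and hence so is $\sigma$. Thus $\sigma:L\to M$ is the desired $K$-isomorphism $K^{alg}\cap E\simeq_K K^{alg}\cap F$. The main subtlety, if any, is verifying non-emptiness of $S(L_0)$ for a general finite $L_0$ in the mixed separable/inseparable setting, which is what forces the decomposition into separable and purely inseparable parts indicated above.
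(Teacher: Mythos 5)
Your overall strategy is the standard one (the paper itself offers no proof of this Fact, it simply cites \cite[Lemma 20.6.3]{FJ}): translate agreement on test sentences into ``the same polynomials of $K[y]$ have roots in $E$ and in $F$'', run a compactness argument over the directed system of finite subextensions of $L:=K^{alg}\cap E$ to produce a $K$-embedding $\sigma:L\to M:=K^{alg}\cap F$, do the same in the other direction, and finish with the fact that a $K$-endomorphism of an algebraic extension is surjective (so $\tau\circ\sigma$ onto forces $\tau$, hence $\sigma$, to be an isomorphism). All of those steps are correct, and in characteristic $0$, or whenever $L$ is separable over $K$, your proof is complete.

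The gap is exactly at the point you flag as the ``main subtlety'', and the fix you indicate does not close it. For imperfect $K$ of characteristic $p$, a finite subextension $L_0\subseteq L$ need not be generated by its maximal separable subextension $L_0^{s}$ together with elements purely inseparable over $K$: in general $L_0\neq L_0^{s}\cdot\left(L_0\cap K^{1/p^\infty}\right)$ (for instance $K=\mathbb{F}_p(u,v)$ and $\alpha$ a root of $X^{p^2}+uX^p+v$ give $K(\alpha)\cap K^{1/p^\infty}=K$ even though $K(\alpha)/K$ is inseparable). So your (true and useful) observation that purely inseparable elements over $K$ are pinned, whence $L\cap K^{1/p^\infty}=M\cap K^{1/p^\infty}$, does not yield non-emptiness of $S(L_0)$: after embedding $L_0^{s}=K(c)$ via some root $c'$ of $\min_K(c)$ in $M$, the remaining generators $b$ of $L_0$ are purely inseparable over $K(c)$ but not over $K$, and one must choose $c'$ so that the unique elements $\left(\text{image of }b^{q}\right)^{1/q}$ land in $M$; the hypothesis only concerns one-variable polynomials over $K$ and says nothing directly about this coherence, and since $L_0$ need not be simple you cannot reduce to a single minimal polynomial either. (When $L_0$ is simple no separability is needed at all: any root in $M$ of $\min_K(\alpha)$ gives the embedding; the primitive element theorem is only used to reach the simple case, and that is precisely what fails here.) This mixed inseparable case is the real content of the lemma beyond the easy separable case, and it is what the citation to \cite[Lemma 20.6.3]{FJ} is carrying. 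Note that in the situation where the paper applies the Fact, $E$ and $F$ are perfect, and there your argument can be repaired cheaply: then $L$ and $M$ contain $K^{1/p^\infty}$ and are separable over it, and since the roots of $\min_K(\alpha)$ coincide with the roots of $\min_{K^{1/p^\infty}}(\alpha)$, your compactness argument runs verbatim over the perfect base $K^{1/p^\infty}$; but for arbitrary $E,F$ as in the statement an additional idea is required.
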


\begin{lem}\vlabel{lem:test} (Folklore) Let $E$ and $F$ be perfect PAC fields, regular extensions of
  their common subfield $k$, and assume that $\gal(k)$ is small, and that
  $\gal(E)$, $\gal(F)$ are isomorphic to $\gal(k)$. Let $K$ be a common
  subfield of $E$ and $F$ containing $k$. Then $E\equiv_K F$ if and only
  if $E$ and
  $F$ satisfy the same test sentences over $K$.  
\end{lem}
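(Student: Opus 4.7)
The plan is to apply Fact \ref{elementary equivalence theorem} with $L := K^{alg}\cap E$ and $M := K^{alg}\cap F$ playing the roles of its $L$ and $M$. The forward direction of the biconditional is immediate, since test sentences over $K$ are particular $\mathcal{L}(K)$-sentences. For the converse, assume $E$ and $F$ agree on all test sentences over $K$; then Fact \ref{fact:test} immediately provides a $K$-isomorphism $\psi : L \to M$.

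A few routine checks set up the remaining algebraic data. First, $L$ is relatively algebraically closed in $E$ by construction, so $E/L$ is regular and in particular separable, and likewise $F/M$. Next, $L$ is perfect: the $p$-th root of $x \in L$ inside the perfect field $E$ is algebraic over $K$, hence lies in $K^{alg}\cap E = L$; likewise $M$. Thus $E$ and $F$ share imperfection degree $0$, and $L^s = M^s = K^{alg}$ inside $\Omega$. I then extend $\psi$ to a $K$-automorphism $\Phi_0$ of $K^{alg}$ with $\Phi_0(L) = M$, and let $\varphi_0 : \gal(M) \to \gal(L)$ be the isomorphism dual to $\Phi_0$.

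The main step is producing the commutative Galois square demanded by Fact \ref{elementary equivalence theorem}. The restriction maps $r_E : \gal(E) \to \gal(L)$ and $r_F : \gal(F) \to \gal(M)$ are surjective by the regularity of $E/L$ and $F/M$. Because $F$ is PAC, $\gal(F)$ is projective by Fact \ref{fact1}(2), and profinite projectivity lifts the composition $\varphi_0 \circ r_F : \gal(F) \to \gal(L)$ through the epimorphism $r_E$ to a continuous homomorphism $\varphi : \gal(F) \to \gal(E)$ satisfying $r_E \circ \varphi = \varphi_0 \circ r_F$. Applying Fact \ref{elementary equivalence theorem} then yields $E \equiv_K F$.

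The trickiest part, I expect, will be the bookkeeping: verifying that $\Phi_0$ genuinely qualifies as a $K^s$-isomorphism in the sense of Fact \ref{elementary equivalence theorem} and that the various Galois data line up precisely as demanded. Smallness of $\gal(k)$ enters via the Hopfian property of small profinite groups: the restriction maps $\gal(E) \to \gal(k)$ and $\gal(F) \to \gal(k)$, which are surjective by the regularity of $E$ and $F$ over $k$, are forced to be isomorphisms since the targets are small and the sources are isomorphic to them. This rigidity aligns the various restriction maps coherently, and, should Fact \ref{elementary equivalence theorem} implicitly require $\varphi$ to be an isomorphism rather than just a homomorphism, smallness together with projectivity permits the projective lift to be upgraded accordingly.
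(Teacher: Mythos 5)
Your overall route is the same as the paper's: reduce to Fact \ref{fact:test} to get a $K$-isomorphism between the relative algebraic closures of $K$ in $E$ and $F$, then feed this into Fact \ref{elementary equivalence theorem}. The divergence is in how you complete the Galois square, and there your emphasis is misplaced. In Fried--Jarden's Theorem 20.3.3 the map $\varphi:\gal(F)\to\gal(E)$ is required to be an \emph{isomorphism}, and this is not optional: with a mere homomorphism the statement is false (take $F$ algebraically closed, so $\gal(F)=1$; the square then commutes trivially for any $E$, yet an algebraically closed field is not elementarily equivalent over $K$ to a PAC field with nontrivial Galois group). So your main step --- lifting $\varphi_0\circ r_F$ through $r_E$ by projectivity of $\gal(F)$ --- produces only a homomorphism and does not by itself prove the lemma; the real content is the smallness argument you relegate to the closing hedge. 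That hedge also needs a refinement: what is needed is that $r_E:\gal(E)\to\gal(L)$ and $r_F:\gal(F)\to\gal(M)$ are isomorphisms, not merely the restrictions to $\gal(k)$. This follows by factoring: since $L\cap k^s=k$ (as $L\subseteq E$ and $E$ is regular over $k$), the epimorphism $\gal(E)\to\gal(k)$ factors as $\gal(E)\to\gal(L)\to\gal(k)$ with both maps onto, and since $\gal(E)\simeq\gal(k)$ is small, the composite and hence each factor is an isomorphism (Fact \ref{fact1}(1), i.e.\ the Hopfian property of small profinite groups); likewise for $F$. Once this is in place, commutativity forces $\varphi=r_E^{-1}\circ\varphi_0\circ r_F$, which is automatically an isomorphism, and projectivity is never used. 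This is exactly the paper's terse argument: it observes that $K^s\cap E$ and $K^s\cap F$ have absolute Galois group isomorphic to $\gal(k)$ and applies the dual isomorphism directly.

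Your remaining bookkeeping is fine and harmless: working with $K^{alg}\cap E$ rather than $K^{s}\cap E$, checking perfectness of $L$ and $M$, imperfection degree $0$, and extending $\psi$ to an automorphism of $K^{alg}$ matches the paper's use of an element of $\gal(K)$ carrying $K^{s}\cap E$ onto $K^{s}\cap F$. So the proposal is salvageable as written, but you should promote the smallness/Hopfian argument from an afterthought to the main step and drop the projectivity lift.
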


\begin{proof} The left to right implication is clear, so assume that $E$
  and $F$ satisfy the same test sentences over $K$. By Fact
  \ref{fact:test}, there is  $\varphi\in\gal(K)$ such that $\varphi(K^{s}\cap
  E)=K^{s}\cap F$. Note that both these fields have absolute Galois
  group isomorphic to $\gal(k)$. Now apply Fact \ref{elementary
    equivalence theorem} to the dual isomorphism $\Phi:\gal(F\cap
  K^{s})\to \gal(E\cap K^{s})$, $\si\mapsto \varphi\inv\circ\si\circ\varphi$. \end{proof}

%% {\tt begin{lem}} \red{Rephrased}
%%   Let $k_1$ be a finite Galois extension of the perfect field $k$, and assume
%%   that $\gal(k)$ is the universal Frattini cover of $\gal(k_1/k)$ (via
%%   the restriction map). Suppose that
%%    $E$ and $F$ are perfect PAC fields which are regular over $k$, have
%%   absolute Galois group isomorphic to $\gal(k)$, and contain a common
%%   subfield $K \supseteq k$. Then $E \equiv_{K} F$ if and only if $E$ and
%%   $F$ satisfy the same test sentences over $K$. \red{We could also
%%     simply assume that they intersect $k_1$ in $k$.}
%% {\tt end lem}

%% begin{proof}
%%     Assume $E$ and $F$ satisfy the same test sentences over $K$.  Then,
%%     by \red{the above fact}, there is 
%%     $\varphi\in\Gal(K)$ such that $\varphi(E \cap K^{alg}) = F \cap
%%     K^{alg}$. Let $\Phi : \gal(F \cap K^{alg}) \to \gal(E \cap K^{alg})$
%%     be the induced isomorphism (i.e., $\Phi(\sigma) = \varphi^{-1} \circ
%%     \sigma \circ \varphi$ for all $\sigma \in \gal(F \cap K^{alg})$).

%%     \begin{cor} Let  $E$ and $F$ be perfect PAC fields, regular extensions of
%%   their common subfield $k$,  assume that $\gal(k)$ is the universal
%%   Frattini cover of $\gal(k_1/k)$, where $k_1$ is a finite Galois extension of $k$, and that
%%   $\gal(E)$, $\gal(F)$ are isomorphic to $\gal(k)$. Then $E\equiv_kF$.
%%     \end{cor}

%%     \begin{proof} As $\gal(k_1/k)$ is finite, $\gal(k)$ is finitely
%%       generated (as a profinite group) and therefore small. Apply Lemma
%%       \ref{lem:test} to $K=k$. 
%%     \end{proof}

    \medskip
    %\begin{rem}
Let $k$ be a perfect field, with small Galois group, and fix 
a regular extension $\calk$ of $k$, with $\gal(\calk)\simeq \gal(k)$ via
the restriction map, and
suppose that $\calk$ is perfect PAC. Our results give then
a   description of the types (in $\calk$) over $k$. %% ;  and in particular, if $\gal(\calk)$ is the universal Frattini
%% cover of $\gal(k_1/\calk)$ for some finite Galois extension $k_1$ of
%% $\calk$
 %\end{rem}

\begin{cor}\vlabel{types} Let $E$ be a subfield of $\mathcal{K}$
  containing $k$, and let $a$ and $b$ be tuples in $\mathcal{K}$. The
  following conditions are equivalent:
  \begin{enumerate}
  \item $\mathrm{tp}(a/E)=\mathrm{tp}(b/E)$;
    \item There is an $E$-isomorphism $E(a)^s\cap \mathcal{K}\to E(b)^s\cap
      \mathcal{K}$ which sends $a$ to $b$;
    \item $a$ and $b$ satisfy the same test-formulas over $E$;
   \item for every finite Galois extension $L$ of $E(a)$, there is a
     field embedding $\varphi:L\to E(b)^s$ such that $\varphi(L\cap
     \mathcal{K})=\varphi(L)\cap \mathcal{K}$.   
 \end{enumerate}
 \end{cor}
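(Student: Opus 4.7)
The plan is to prove the cycle $(1) \Rightarrow (3) \Rightarrow (2) \Rightarrow (1)$ together with $(2) \Leftrightarrow (4)$. The two workhorses are Fact~\ref{fact:test}, which turns test-sentence agreement into an isomorphism of relative algebraic closures, and Lemma~\ref{lem:test}, which upgrades such an isomorphism to elementary equivalence in our PAC setting. The implication $(1) \Rightarrow (3)$ is immediate, since test-formulas are first-order.

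For $(3) \Rightarrow (2)$, I would first extract from (3) the quantifier-free information that there is an $E$-field isomorphism $\sigma: E(a) \to E(b)$ with $\sigma(a) = b$. Viewing $\mathcal{K}$ as an extension of $L := E(a)$ in two ways---via the natural inclusion and via $\sigma$---the hypothesis (3) rephrases as the statement that these two $L$-extensions satisfy the same test sentences over $L$. Fact~\ref{fact:test} then produces an $E$-field isomorphism $E(a)^{alg} \cap \mathcal{K} \to E(b)^{alg} \cap \mathcal{K}$ sending $a$ to $b$, which restricts to the desired isomorphism on $E(a)^s \cap \mathcal{K}$ because field isomorphisms preserve separability.

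For $(2) \Rightarrow (1)$, I would set $K := E(a)^s \cap \mathcal{K}$, let $\phi: K \to E(b)^s \cap \mathcal{K}$ be the isomorphism from (2), and once again view $\mathcal{K}$ as a $K$-extension in two ways (via the inclusion and via $\phi$). Both copies are perfect PAC and regular over $K$ (since $K$ is relatively separably closed in $\mathcal{K}$), and the restriction map identifies $\gal(\mathcal{K})$ with $\gal(K)$, so $\gal(K) \simeq \gal(k)$ is small; thus the hypotheses of Lemma~\ref{lem:test} are satisfied. I would then verify that these two $K$-structures satisfy the same test sentences over $K$: for $f \in K[y]$, existence of a root in $\mathcal{K}$ reduces---by relative separable closedness of $K$ in $\mathcal{K}$ in the separable case, and by perfectness of $\mathcal{K}$ together with the substitution $y \mapsto y^{p^n}$ in the inseparable case---to existence of a root in $K$, and $\phi$ bijects such roots with roots of $\phi(f)$ in $\phi(K)$. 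Lemma~\ref{lem:test} then gives $\mathcal{K}_1 \equiv_K \mathcal{K}_2$, and evaluating any $\mathcal{L}(E)$-formula $\psi(x)$ at the constant $a \in K$---interpreted as $a$ in one copy and as $\phi(a) = b$ in the other---delivers $\tp(a/E) = \tp(b/E)$.

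Finally, for $(2) \Leftrightarrow (4)$, the forward direction is straightforward: extend the isomorphism of (2) to an $E$-embedding $\widetilde{\phi}: E(a)^s \to E(b)^s$ with $\widetilde{\phi}(a) = b$, and restrict it to each finite Galois $L/E(a)$; the intersection condition follows from $\widetilde{\phi}(E(a)^s \cap \mathcal{K}) = E(b)^s \cap \mathcal{K}$. For the converse, I would let $X_L$ denote the set of $E$-embeddings $\varphi: L \to E(b)^s$ with $\varphi(a) = b$ and $\varphi(L \cap \mathcal{K}) = \varphi(L) \cap \mathcal{K}$; by (4) each $X_L$ is a nonempty finite set, and injectivity of the embeddings ensures the restriction maps $X_{L'} \to X_L$ are well-defined, so compactness of the inverse limit of finite nonempty sets yields a compatible family that glues to a global $E$-embedding realizing (2). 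The main obstacle I anticipate is the verification in $(2) \Rightarrow (1)$ of test-sentence agreement in positive characteristic---handling inseparable polynomials carefully and, if $K$ fails to be perfect, passing to its perfect hull inside $\mathcal{K}$ in order to cleanly apply Lemma~\ref{lem:test}.
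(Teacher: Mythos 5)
Your proof is correct and runs on the same engine as the paper's: the paper's one-line proof just observes that the restriction maps $\gal(\mathcal{K})\to\gal(E(a)^s\cap\mathcal{K})\to\gal(k)$ are isomorphisms and invokes Fact \ref{elementary equivalence theorem}, which is exactly what your route through Fact \ref{fact:test} and Lemma \ref{lem:test} amounts to (Lemma \ref{lem:test} is the packaged form of that invocation); your explicit treatment of $(2)\Leftrightarrow(4)$, extending $\phi$ to the separable closures and gluing via an inverse limit of finite nonempty sets, fills in what the paper leaves implicit, and your reading of (4) as asking for $E$-embeddings sending $a$ to $b$ is the intended one. One inaccuracy to repair in $(2)\Rightarrow(1)$: from $K=E(a)^s\cap\mathcal{K}$ being separably closed in $\mathcal{K}$ you cannot conclude that $\mathcal{K}$ is regular over $K$ in positive characteristic --- if $K$ is imperfect, then $K^{1/p}\subseteq\mathcal{K}$ because $\mathcal{K}$ is perfect, so $\mathcal{K}/K$ is not even separable. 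This does not damage the argument, because Lemma \ref{lem:test} only requires regularity over the small perfect base $k$ of the Setting (which is given) and allows the ``common subfield'' to be an arbitrary subfield containing $k$: instantiate it with base $k$ and common subfield $K$, and combine with your test-sentence verification over $K$ (the step that genuinely needs care with inseparable polynomials, which you handle correctly via the substitution $y\mapsto y^{p^n}$, perfectness of $\mathcal{K}$, and separable closedness of $K$ and $\phi(K)$ in $\mathcal{K}$). With that instantiation the obstacle you anticipate at the end --- passing to perfect hulls when $K$ is imperfect --- does not arise at all.
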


\begin{proof} Note that the restriction maps $\gal(\calk)\to
  \gal(E(a)^s\cap\calk)\to \gal(k)$ and $\gal(\calk)\to
  \gal(E(b)^s\cap\calk)\to \gal(k)$ are isomorphisms, and apply Fact \ref{elementary
    equivalence theorem}.  \end{proof}

\begin{lem} \vlabel{test:Frob} (Folklore) Let $k$ be a perfect field, and $\mathcal{K}$ a perfect Frobenius
  field containing $k$ and regular over $k$. Let $a,b\in \calk$. The
  following are equivalent:
  \begin{itemize}
  \item[(a)] $\tp(a/k)=\tp(b/k)$.
    \item[(b)] There is a $k$-isomorphism $k(a)^s\cap\calk\to k(b)^s\cap\calk$
      which sends $a$ to $b$.
      \item[(c)] $a$ and $b$ satisfy the same test-formulas over $k$. 
\end{itemize}
\end{lem}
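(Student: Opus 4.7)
\medskip

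\noindent\textbf{Proof plan.} I plan to establish the cycle (a)$\Rightarrow$(c)$\Rightarrow$(b)$\Rightarrow$(a), closely following the templates of Corollary \ref{types} and Lemma \ref{lem:test}, but with projectivity of $\gal(\calk)$ (automatic from PAC-ness, cf.\ Fact \ref{fact1}(2)) playing the role that smallness of $\gal(k)$ played in those proofs. The implication (a)$\Rightarrow$(c) is immediate, since every test formula is a formula.

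For (c)$\Rightarrow$(b), I extract from the quantifier-free part of the condition a $k$-isomorphism $\iota: k(a)\to k(b)$ with $\iota(a)=b$ (quantifier-free formulas are in particular test formulas). Viewing $\calk$ as a field extension of $k(a)$ in two ways --- directly, and via $\iota$ followed by the inclusion $k(b)\subseteq\calk$ --- the assumption that $a$ and $b$ satisfy the same test formulas over $k$ translates, after this identification, to these two structures satisfying the same test sentences over $k(a)$. Applying Fact \ref{fact:test} yields a $k(a)$-isomorphism between the two copies of $k(a)^{alg}\cap\calk$, which restricts to a $k$-isomorphism $k(a)^s\cap\calk\to k(b)^s\cap\calk$ sending $a$ to $b$.

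For (b)$\Rightarrow$(a) I apply Fact \ref{elementary equivalence theorem} with $E=F=\calk$, $K=k(a)$ (embedded in the $F$-side via $\iota:=\varphi_0|_{k(a)}$), $L=k(a)^s\cap\calk$ and $M=k(b)^s\cap\calk$. I extend the given $\varphi_0:L\to M$ to a $K^s$-isomorphism $\Phi_0:L^s\to M^s$ by Galois theory, adjusting the chosen embedding $K^s\hookrightarrow M^s$ if necessary so that $\Phi_0$ genuinely fixes $K^s$ pointwise. The restriction maps $\mathrm{res}_L:\gal(\calk)\to\gal(L)$ and $\mathrm{res}_M:\gal(\calk)\to\gal(M)$ are epimorphisms because $L^s\cap\calk=L$ and $M^s\cap\calk=M$ (any element of $\calk$ that is separable-algebraic over $L$ is separable-algebraic over $k(a)$, hence lies in $L$, and similarly for $M$). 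By projectivity of $\gal(\calk)$, I lift $\psi_0\circ\mathrm{res}_M$ along $\mathrm{res}_L$ --- where $\psi_0:\gal(M)\to\gal(L)$ is the Galois-group isomorphism dual to $\Phi_0$ --- to obtain a continuous homomorphism $\varphi:\gal(\calk)\to\gal(\calk)$ that completes the required commutative square. Fact \ref{elementary equivalence theorem} then yields $(\calk,a)\equiv_k(\calk,b)$, which is exactly $\tp(a/k)=\tp(b/k)$.

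The main technical subtlety I anticipate is verifying the separability hypothesis $E/L$, $F/M$ of Fact \ref{elementary equivalence theorem}: this is delicate when $L$ or $M$ fails to be perfect, as can happen whenever $a$ has a transcendental component over $k$ (so $k(a)$, and potentially $L$, is imperfect). This is to be handled in the standard way by which the elementary equivalence theorem is applied in the perfect-PAC setting, exploiting the perfection of $\calk$ to control the purely inseparable part of the extensions.
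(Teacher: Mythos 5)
Your (a)$\Rightarrow$(c) and (b)$\Leftrightarrow$(c) steps are fine and match the paper (both directions between (b) and (c) are essentially Fact \ref{fact:test}). The problem is (b)$\Rightarrow$(a). Fact \ref{elementary equivalence theorem} is Theorem 20.3.3 of \cite{FJ}, and there the map $\varphi:\gal(F)\to\gal(E)$ completing the square must be an \emph{isomorphism}; with $\varphi$ an arbitrary homomorphism the statement is simply false (take $K=L=M$ algebraically closed, so that the trivial homomorphism completes the square between any two perfect PAC fields containing $K$, e.g.\ one with procyclic and one with $\omega$-free absolute Galois group, which are not elementarily equivalent over $K$). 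In the paper's other applications (Lemma \ref{lem:test}, Corollary \ref{types}) this is harmless because smallness forces the restriction maps $\gal(\calk)\to\gal(k(a)^s\cap\calk)\to\gal(k)$ to be isomorphisms, so the lifted map is automatically one. In the Frobenius case the restriction $\gal(\calk)\to\gal(k(a)^s\cap\calk)$ is surjective but typically has enormous kernel, and the lift supplied by projectivity is merely a continuous homomorphism: projectivity does not let you choose it surjective (that is exactly what the embedding property is for), let alone injective. So your argument never uses the hypothesis that $\calk$ is Frobenius, which is a warning sign: for a general perfect PAC field with projective Galois group, (b) does \emph{not} imply (a), so no proof using only projectivity can be correct.

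The paper's proof of (b)$\Rightarrow$(a) instead invokes (an adaptation of) Theorem 30.6.3 of \cite{FJ} with $M=M'=\calk$, i.e.\ the elementary-equivalence/embedding machinery specific to Frobenius fields, whose proof is a back-and-forth built on the Embedding Lemma and uses the Iwasawa (embedding) property of $\gal(\calk)$ in an essential way. To repair your argument you would have to replace the single application of Fact \ref{elementary equivalence theorem} by that Frobenius-field theorem (or reproduce its back-and-forth, using superprojectivity to realize the required epimorphisms at each stage). Your closing remark about separability of $\calk$ over $L=k(a)^s\cap\calk$ is a legitimate but secondary point (one passes to the perfect closure of $L$ inside $\calk$, which does not change the Galois data); it is not where the argument breaks.
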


\begin{proof} We already know that (b) and (c) are equivalent (Fact
  \ref{fact:test}), and clearly (a) implies (c). Assume now that (b)
  holds, and let $\varphi:k(a)^s\cap\calk\to k(b)^s\cap\calk$ be a
  $k$-isomorphism sending $a$ to $b$. Then (an easy adaptation of)
  Theorem 30.6.3 of \cite{FJ} gives the result (with $M=M'=\calk$). 

\end{proof}

%%     Our hypothesis implies that the restriction map $\gal(E) \to \gal(k)$ is
%%     surjective and therefore an isomorphism, since $G(k)$, being the
%%     universal Frattini cover of a finite group, is small. Therefore, the restriction $r: \gal(E) \to \mathrm{Gal}(k_{0}/k)$ is the universal Frattini cover.  This restriction map can be factored as a composition of restrictions $\gal(E) \to \gal(E \cap K^{alg})$ and $\gal(E \cap K^{alg}) \to \mathrm{Gal}(k_{0}/k)$. It follows by Lemma \ref{easy fact} that the restriction $\gal(E) \to \gal(E \cap K^{alg})$ is a Frattini cover and, therefore, the universal Frattini cover, since $\gal(E)$ is projective. By symmetry, the restriction $r': \gal(F) \to \gal(F \cap K^{alg})$ is also the universal Frattini cover of $\gal(F \cap K^{alg})$, so $\Phi \circ r' :\gal(F) \to \gal(E \cap K^{alg})$ is another universal Frattini cover of $\gal(E \cap K^{alg})$. Uniqueness of the universal Frattini cover implies there is some $\Psi: \gal(E) \to \gal(F)$ such that $ \Phi \circ r' \circ \Psi = r$, i.e. the following diagram commutes:
%%     $$
%% \xymatrix{ 
%% \gal(F) \ar@{->}[r]^{\Psi^{-1}} \ar@{->}[d]^{r'} & \gal(E) \ar@{->}[d]^{r} \\
%% \gal(F \cap K^{alg}) \ar@{->}[r]^{\Phi} & \gal(E \cap K^{alg}).
%% }
%% $$
%% Then by Fact \ref{elementary equivalence theorem}, $E \equiv_{K} F$. 
%% \end{proof}

%% This gives a description of types.  Let $\mathcal{K} \models \mathrm{Th}(k)$ be a large saturated model of the theory of $k$. 

\para {\bf Inverse systems associated to profinite groups}. 
We recall
briefly the definition and structure of the complete system $S(G)$ associated to a
profinite group $G$. For more properties and details, see for instance, \cite{C-IP},
and for the ``interpretation'' of $S(\gal(K))$ in $K$, see the appendix of
\cite{C-omega}. 

To the profinite group $G$, we associate the structure $S(G)$ defined as
follows: its universe is the disjoint union of all  $G/N$, where $N$ runs
over all open normal subgroups of $G$. One then knows
that $$G=\varprojlim_N G/N,$$
where for $N\subset M$, the connecting map $\pi_{N,M}$ is the
  natural projection $G/N\to G/M$.
The language is a many-sorted language, with sorts indexed by the
positive integers, and by convention an element $gN$ ($\in G/N$) is of sort $n$
if and only if $[G:N]\leq n$. We let $S(G)_i$  be the set of elements
of $S(G)$ of sort $\leq i$. We then have relational symbols $C,\leq,P$
(which should be indexed by tuples of integers, but for notational simplicity we omit these) and a
constant $1$, which
are interpreted as follows:
$1=G/G$; $gN\leq hM$ if and only if $N\subseteq M$; $C(gN,hM)$ if and only if
$gM=hM$; $P(g_1N_1,g_2N_2,g_3N_3)$ if and only if $N_1=N_2=N_3$ and
$g_1g_2N_1=g_3N_1$. Using $\leq$, one also defines $gN\sim hM$ if and
only if $N=M$. The $\sim$-equivalence classes of $S(G)$ are then the finite
quotients of $G$, they come with their group law (given by $P$), as well as projection
maps (with graphs given by $C$) onto their quotients.  There is a set $\Sigma$ of sentences of
the language 
which axiomatizes the structures of the form $S(G)$ for a profinite group $G$, and an epimorphism
$G\to H$ dualizes to an embedding $S(H)\to S(G)$. 

Note that the set of $\sim$-equivalence classes of $S(G)$ forms a modular
lattice, and that any finite quotient of an $\sim$-equivalence class $[\alpha]_\sim$
appears as an $\sim$-equivalence class in $S(G)$ (this is why  the
system is called {\em complete}).

The functor $S$ has a natural dual, namely given $S\models\Sigma$, and
for $\alpha\in S$, letting 
$[\alpha]_\sim$ denote the $\sim$-equivalence class of $\alpha$,
define $$G(S)=\varprojlim_{\alpha\in S}[\alpha]_\sim,$$
where if $\alpha\leq\beta\in S$, the graph of the epimorphism
$\pi_{\alpha,\beta}: [\alpha]_\sim\to [\beta]_\sim$ is given by $C\cap
([\alpha]_\sim\times[\beta]_\sim)$. 

If $A\subset S(G)$, there is a notion of {\em complete subsystem generated by
$A$}: it is the smallest subset $S$ of $S(G)$ satisfying the following
properties:  
if $\alpha\leq\beta$, and $\alpha\in S$ then $\beta\in S$; $1\in S$; if
$\alpha,\beta\in S$ then there is $\gamma\leq\alpha,\beta$ such that
$\gamma\in S$ (note that if $\alpha=gN$ and $\beta=hM$, then
$\gamma=N\cap M$ works). (In other words, the smallest model of $\Sigma$
containing $A$.) Then $G(S(A))$ is the quotient of $G$ by the
intersection of all open
normal subgroups $N$ of $G$ such that some coset $gN$ is in $A$. 

\begin{fact}\begin{enumerate}
\item A profinite group $G$ is small if and only if $S(G)_i$ is finite
  for every $i\geq 1$.

  \item There is a theory $T_{\rm IP}$ which axiomatizes the structures
    $S(G)$ where $G$ has the embedding property.
     There is a theory $T_{\rm Proj}$ which axiomatizes the structures
      $S(G)$ with $G$ projective. Both assertions are essentially
     trivial, since the image of the morphism completing the diagram in
      Definitions \ref{proj-EP}(1) and (3)  has
     size bounded by $|B|$. 

\item Given a sentence $\theta$ of the language of complete systems, and
  a field $K$, 
  there is an $\mathcal{L}(K)$-sentence $\theta^*$ such that $$K\models
  \theta^* \iff S(\gal(K))\models \theta.$$   
(See e.g. \cite{C-omega} for details.) 
 \item Things are a little more complicated for formulas, since $K$ only
   interprets $S(\gal(K))$ ``up to conjugation'', so some care needs to
   be taken. One can reduce to the case of a formula
   $\theta(\xi_1,\ldots,\xi_m)$ implying $\xi_i\sim\xi_j$ for all
   $i\neq j$. 
   
\end{enumerate}
  \end{fact}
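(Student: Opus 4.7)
For (1), I would use the equivalent formulation of ``small'' noted in the definition: for each $n$ only finitely many open normal subgroups of $G$ have index at most $n$. Then $S(G)_i=\bigsqcup_{[G:N]\leq i} G/N$ is a finite disjoint union of finite sets, hence finite. Conversely, if $S(G)_i$ is finite then at most finitely many open normal subgroups of index $\leq i$ can appear as labels of cosets in the union, and hence only finitely many isomorphism types of quotient of size $\leq i$ arise.

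For (2), the observation to exploit is that in Definition~\ref{proj-EP}(1) and (3) each instance quantifies over finite groups $A$, $B$ and an epimorphism $\beta: B\to A$, and the morphism $\gamma:G\to B$ to be produced has image of size at most $|B|$. For projectivity I would include, for every triple $(A,B,\beta)$ (up to isomorphism) and every subgroup $B_0\leq B$ with $\beta(B_0)=A$, the axiom: ``whenever $[\alpha]_\sim$ is a $\sim$-class whose group is identified with $A$, there is $[\alpha']_\sim\leq[\alpha]_\sim$ whose group is identified with $B_0$ in such a way that the induced projection agrees with $\beta|_{B_0}$''. Because $|B_0|\leq|B|$ is a priori bounded, this is a single first-order axiom, and the conjunction over all such triples axiomatizes the $S(G)$ with $G$ projective. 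The axioms for $T_{\mathrm{IP}}$ are analogous: they additionally assume that some $\sim$-class realizes $B$ (to enforce $B\in\mathrm{Im}(G)$) and require the lift to have group exactly $B$ with projection $\beta$. Standard completeness/soundness arguments, together with the fact that any $S\models\Sigma$ is of the form $S(G)$ for $G=G(S)$, show that these axioms exactly capture the desired classes.

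For (3) and (4), I would set up the standard interpretation of $S(\gal(K))$ in $K$. Each element of $S(\gal(K))_i$ corresponds to a pair $(L/K, gN)$ with $L/K$ Galois of degree at most $i$ and $gN\in\gal(L/K)$; such an $L$ is coded by the tuple of conjugates of a primitive element, and the relations $\leq$, $C$, $P$, $\sim$ become polynomial conditions over $K$ (tower conditions, group law, common kernel). Induction on the complexity of $\theta$ then produces an $\mathcal{L}(K)$-sentence $\theta^*$, and one checks that the translation does not depend on the choices of primitive elements or on the distinguished embedding $K^s\to K^{alg}$. The subtlety in (4) is that changing this embedding replaces each element of $\gal(K)$ by a simultaneous conjugate, which is why one must reduce to formulas $\theta(\xi_1,\ldots,\xi_m)$ forcing all $\xi_i$ to lie in a common $\sim$-class: conjugation by $\gal(L/K)$ for the common class $L$ then acts uniformly on the tuple, so the truth of $\theta$ is well-defined after quotienting by that finite action. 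The main obstacle in this program is the coherence bookkeeping for (3)-(4), namely keeping the interpretation compatible across different primitive elements and different embeddings of $K^s$, for which I would refer to the treatment in \cite{C-omega}.
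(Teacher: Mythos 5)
Your treatment of (1), (3) and (4) is essentially what the paper intends: the paper states these as known facts, deferring the interpretation of $S(\gal(K))$ in $K$ and the conjugation issue for formulas to \cite{C-omega}, exactly as you do; your reading of ``small'' via open normal subgroups of index $\leq n$ is the intended one (it is the reading under which the paper's ``equivalently, only finitely many Galois extensions of degree $\leq n$'' is correct), and the disjoint-union observation is the whole content of (1). Your $T_{\rm IP}$ axioms are also correctly set up.

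The genuine problem is your axiomatization of $T_{\rm Proj}$. You include one axiom for \emph{every} subgroup $B_0 \leq B$ with $\beta(B_0)=A$, demanding a class below $[\alpha]_\sim$ realizing that particular $B_0$. But projectivity (Definition \ref{proj-EP}(1)) only asserts that \emph{some} completing morphism $\gamma: G \to B$ exists; its image is some subgroup of $B$ mapping onto $A$, and which subgroup occurs is not prescribed. The correct sentence attached to a datum $(A,B,\beta)$ is therefore: for every class identified with $A$, there \emph{exist} a subgroup $B_0 \leq B$ with $\beta(B_0)=A$ and a class below it identified with $B_0$ compatibly with $\beta|_{B_0}$ --- a finite disjunction over the finitely many such $B_0$, which is first order precisely because $|B_0| \leq |B|$ (this is the point of the paper's parenthetical remark). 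As written, your theory is strictly stronger than projectivity: take $G=\hat{\mathbb{Z}}$ (which is projective), $A=\mathbb{Z}/2\mathbb{Z}$, $B=S_3$, $\beta$ the sign map, and $B_0=S_3$; your axiom then demands a class with group $S_3$ below any class with group $\mathbb{Z}/2\mathbb{Z}$, yet $\mathrm{Im}(\hat{\mathbb{Z}})$ contains no copy of $S_3$, so $S(\hat{\mathbb{Z}})$ would fail your $T_{\rm Proj}$. The fix is simply to move the quantification over $B_0$ inside the axiom as an existential (disjunctive) one; with that change, and keeping the quantification over the finitely many identifications of a class's group with $A$, your scheme does axiomatize the intended class.
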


\begin{fact} \vlabel{embedding cover facts}
    Let $G$ be a profinite group.
    \begin{enumerate}
        \item The group $G$ has a cover $H$ with the embedding property
          such that $\mathrm{rank}(G) = \mathrm{rank}(H)$ ($=$ minimal
          number of topological generators of $G$). If $G$ is finite and $\mathrm{rank}(G) = e$, then $H$ can be chosen with $|H| \leq |G|^{|G|^{e}}$ (\cite[Corollary~24.3.4]{FJ}).
        \item If $G$ has the embedding property, then so does its
          universal Frattini cover $\tilde{G}$ and, hence, $\tilde{G}$
          is superprojective (\cite[Prop.~24.3.5]{FJ}).
    \end{enumerate}
\end{fact}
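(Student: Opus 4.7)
\emph{For (1),} let $e=\mathrm{rank}(G)$ and fix an epimorphism $\pi:\hat F_e\twoheadrightarrow G$ from the free profinite group of rank $e$. Since $\hat F_e$ is free it has the embedding property, so the plan is to cut down to a quotient of $\hat F_e$ that still covers $G$ and retains this property. For general $G$, I would enumerate the countable collection of finite embedding problems over $G$ and iteratively solve them by taking fibre products, producing a descending sequence of closed normal subgroups of $\hat F_e$; the intersection defines $H$, which covers $G$, has rank at most $e$ (as a quotient of $\hat F_e$) and hence exactly $e$ (since it covers $G$), and inherits the embedding property from the saturation process.

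For the explicit bound when $G$ is finite, consider $\mathrm{Epi}(\hat F_e,G)$: each such epimorphism is determined by the images of the $e$ free generators, so there are at most $|G|^e$ of them. Let $H$ be the image of the diagonal map
\[
\Delta:\hat F_e\longrightarrow \prod_{\phi\in\mathrm{Epi}(\hat F_e,G)} G,\qquad g\mapsto (\phi(g))_\phi.
\]
Then $|H|\leq |G|^{|G|^e}$, $H$ projects onto $G$ via the $\pi$-coordinate, and $H$ is topologically generated by $e$ elements, so $\mathrm{rank}(H)=e$. The key and slightly delicate point is that $H$ has the embedding property: any embedding problem for $H$ pulls back along $\Delta$ to one for $\hat F_e$, is solved there by freeness, and the solution descends to $H$ by the universality of the diagonal construction.

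\emph{For (2),} let $\tilde\varphi:\tilde G\twoheadrightarrow G$ be the universal Frattini cover, and consider an embedding problem $\alpha:\tilde G\twoheadrightarrow A$, $\beta:B\twoheadrightarrow A$ with $B\in\mathrm{Im}(\tilde G)$. The plan is to descend to an embedding problem for $G$, solve it there, and lift. By Fact~\ref{frat fact}, $A$ and $B$ are Frattini covers of canonical quotients $A_0=A/\alpha(\ker\tilde\varphi)$ and $B_0\in\mathrm{Im}(G)$ respectively, and $\alpha$ descends by construction to an epimorphism $\alpha_0:G\twoheadrightarrow A_0$. A short diagram chase using Definition~\ref{Frattini} shows that $\beta$ descends to an epimorphism $\beta_0:B_0\twoheadrightarrow A_0$. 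Applying the embedding property of $G$ yields an epi $\gamma_0:G\twoheadrightarrow B_0$ with $\beta_0\circ\gamma_0=\alpha_0$; projectivity of $\tilde G$ (Fact~\ref{fact2}) then lifts $\gamma_0$ to $\gamma:\tilde G\to B$ compatible with $\alpha$ and $\beta$, and Definition~\ref{Frattini}(c) applied to $B\twoheadrightarrow B_0$ ensures that $\gamma$ is surjective.

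The main obstacle is part (2): checking that the embedding problem really descends to a well-defined one for $G$ (so that $\beta_0$ exists between the canonical Frattini quotients) and that the lifted $\gamma$ is genuinely epimorphic. Both steps rely on careful simultaneous use of the equivalent characterizations of Frattini covers in Definition~\ref{Frattini}.
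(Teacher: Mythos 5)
This statement is quoted in the paper as a black-box fact with citations to Fried--Jarden (Cor.~24.3.4 and Prop.~24.3.5); the paper contains no proof, so your proposal has to stand on its own against the standard arguments. For part (1) in the finite case your diagonal construction is indeed the right one, but the crux --- that $H=\hat F_e/N$, $N=\bigcap\{\ker\phi:\phi\in\mathrm{Epi}(\hat F_e,G)\}$, has the embedding property --- is exactly the point you wave at with ``descends to $H$ by the universality of the diagonal construction,'' and that phrase is not an argument: the surjective solution $\tilde\gamma:\hat F_e\twoheadrightarrow B$ produced over the free group has no a priori relation to the diagonal map, so nothing yet forces $N\leq\ker\tilde\gamma$. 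What makes it work is Gasch\"utz's Lemma used twice: once to get an epimorphic solution of the pulled-back problem (possible because $B\in\mathrm{Im}(H)$ is $e$-generated), and once to show that any two epimorphisms $\hat F_e\twoheadrightarrow B$ differ by an automorphism of $\hat F_e$ (Gasch\"utz plus Hopficity); since $N$ is invariant under automorphisms and some epimorphism onto $B$ kills $N$, they all do, and only then does $\tilde\gamma$ descend. Your sketch for general profinite $G$ is also too thin: the finite embedding problems need not be countable, the relevant problems are those of the limit group (whose finite images are not known in advance), and rank preservation in the limit needs an argument --- this is where FJ's Cor.~24.3.4 does real work.

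Part (2) has a genuine gap at its central step. You form $A_0=A/\alpha(\ker\tilde\varphi)$ using $\alpha$, but the only way to produce a quotient $B_0\in\mathrm{Im}(G)$ of which $B$ is a Frattini cover is via some epimorphism $\rho:\tilde G\twoheadrightarrow B$ witnessing $B\in\mathrm{Im}(\tilde G)$, and $\rho$ has no compatibility with $\alpha$ or $\beta$. For $\beta$ to induce $\beta_0:B_0\to A_0$ you would need $\beta(\rho(\ker\tilde\varphi))\subseteq\alpha(\ker\tilde\varphi)$, and the image of $\ker\tilde\varphi$ in a finite quotient genuinely depends on the chosen epimorphism: e.g.\ for $G=\mathbb{Z}/p\times\mathbb{Z}/p^2$, $\tilde G$ is the free pro-$p$ group of rank $2$, and the two epimorphisms of $\tilde G$ onto $A=(\mathbb{Z}/p^2)^2$ obtained from the coordinate swap send $\ker\tilde\varphi$ onto $p\mathbb{Z}/p^2\times 0$ and $0\times p\mathbb{Z}/p^2$ respectively. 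So there is in general no induced map between your ``canonical'' quotients, and producing a finite embedding problem for $G$ that dominates the given one for $\tilde G$ compatibly is precisely the substance of FJ's Prop.~24.3.5, not a short diagram chase. The final step is also glossed: projectivity of $\tilde G$ lifts the single map $\gamma_0\circ\tilde\varphi$ along $B\to B_0$, but you need a homomorphism into $B$ compatible with $\alpha$ \emph{and} $\gamma_0\circ\tilde\varphi$ simultaneously, i.e.\ a lift along $B\to B_0\times_{A_0}A$, whose surjectivity is again the same unresolved compatibility; only the very last point (surjectivity of a lift through the Frattini cover $B\to B_0$, via Definition~\ref{Frattini}(b)) is fine as stated.
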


\section{Measures in a special case}

 %% Let $k$ be a perfect PAC field
%% with absolute Galois group $\widetilde{\mathrm{Gal}(k_{0}/k)}$, the
%% universal Frattini cover of $\mathrm{Gal}(k_{0}/k)$, where $k_{0}/k$
%% is finite Galois. Let $\sigma_{0}, \ldots, \sigma_{n}$ be a set of
%% generators for $\mathrm{Gal}(k_{0}/k)$.

In this section, we define measures on definable subsets of perfect PAC fields whose absolute Galois group is the universal Frattini cover of a finite group. This is a very restrictive condition, but we show in the next section that this construction can be leveraged to define measures for \emph{all} bounded perfect PAC fields, and perfect Frobenius fields as well.

\para{\bf Setting} \vlabel{setting-sec3-1}\\
Let $k$ be a perfect field, with finitely generated absolute Galois
group $\gal(k)=\langle \si_1,\ldots,\si_n\rangle$; we also assume that
$k\subset\calk$, where $\calk$
  is a sufficiently saturated perfect PAC field, with the restriction
  map $\gal(\calk)\to \gal(k)$ an isomorphism.
%% Let $k_1$ be a finite Galois extension of the perfect field $k$,  assume
%%   that $\gal(k)$ is the universal Frattini cover of $\gal(k_1/k)$ (via
%%   the restriction map), and let $\si_1,\ldots,\si_n$ be a set of generators
%%   of $\gal(k_1/k)$. We also assume that $k\subset\calk$, where $\calk$
%%   is a sufficiently saturated perfect PAC field, with the restriction
%%   map $\gal(\calk)\to \gal(k)$ an isomorphism.
  %% \red{\bf I think one does
%%   not need $k\prec K$.}

We suppose $V$ is an absolutely irreducible variety defined over $k$ with generic point $a$. We are ultimately interested in defining a measure $\mu_{V}$ on definable subsets of $V$.  

Let $L/k(a)$ be a finite Galois extension, define $k_L=k^s\cap L$, and 
%% which contains $k_1$
let ${\bar\si}'=(\si_1',\ldots,\si'_n)\in\gal(L/k(a))^n$ be a lift of
$(\si_1,\ldots,\si_n)\rest_{k_L}$. 
%% \st{define $k_{L} = k^{s} \cap L$.}
Given an intermediate field $k(a) \subseteq
K \subseteq L$ such that $K\cap k_L=k$, i.e., such that $K/k(a)$ is
  regular over $k$, 
 we define $$
\mathcal{S}(L/K) = \left\{F : K \subseteq F \subseteq L, F \cap k_L =
k \right\}.$$
%% Note that by Remark \ref{rem1}(4), every element of $\cals(L/K)$ will be
%% a regular extension of $k$.  
%% For $F\in\cals(L/K)$ we then define
%% $$\mu^1_{L/K}(F)=\frac{|\{(\tau_1,\ldots,\tau_n)\in \gal(L/Kk_1)^n\mid \fix(\tau_1\si'_1,\ldots,\tau_n\si'_n)=F\}|}{[L:Kk_1]^n.$$
%% 
\para\vlabel{first measure}{\bf Definition of a first measure}. Given any $X \subseteq \mathcal{S}(L/K)$, we define a probability
measure on $\cals(L/K)$  by setting
$$
\mu^{1}_{L/K}(X) = \frac{|\{(\tau_{1}, \ldots, \tau_{n}) \in \mathrm{Gal}(L/Kk_{L})^{n} : \mathrm{Fix}(\sigma'_{1}\tau_{1}, \ldots, \sigma'_{n}\tau_{n}) \in X\}|}{[L : Kk_{L}]^{n}}.  
$$
Note that $\bar\tau\in\gal(L/Kk_L)^n$ ensures that
$\fix(\sigma'_{1}\tau_{1}, \ldots, \sigma'_{n}\tau_{n})\in\cals(L/K)$. 

\medskip
To simplify notation, for a field $F \in \mathcal{S}(L/K)$, we will
write $\mu^{1}_{L/K}(F)$ instead of
$\mu^{1}_{L/K}(\{F\})$. Additionally, given $\overline{\tau} =
(\tau_{1}, \ldots, \tau_{n})$ and $\overline{\sigma}' = (\sigma'_{1},
\ldots, \sigma'_{n})$, we will write $\overline{\sigma}' \cdot
\overline{\tau}$ (or $\overline{\sigma}' 
\overline{\tau}$) for the tuple $(\sigma'_{1}\tau_{1}, \ldots,
\sigma'_{n} \tau_{n})$. Finally, it will sometimes simplify equations to
 write $\mu^{1}_{L/K}(F)$ when $F \not\in \mathcal{S}(L/K)$.  In
this case, we stipulate $\mu^{1}_{L/K}(F) = 0$. 

\begin{lem} \vlabel{maximal}\vlabel{rem2}
Suppose $K \supseteq k(a)$ is regular over $k$ %% satisfies $K \cap k_{1} = k$
and $L/k(a)$ is
a finite Galois extension containing $K$. %%  containing $Kk_{1}$.
Note that by definition of
$\mu^1_{L/K}$,
if $F\in \cals(L/K)$, then $\mu^1_{L/K}(F)>0$ if and only there is a
tuple $\bar\tau\in \gal(L/Kk_L)$ such that
$\fix({\bar\si}'\cdot\bar\tau)=F$.
\begin{enumerate}
   \item \vlabel{no dependence} The definition of $\mu^1_{L/K}$ does not depend on the choice of
    ${\bar\si}'$.
  
 \item \vlabel{extension} Suppose $M$ is a Galois extension of $k(a)$ which
   contains $L$. If $N \in \mathcal{S}(M/K)$, 
   $F\in \cals(L/(N \cap L))$, and $X=\{N'\in\cals(M/N)\mid N'\cap L=F\}$, then
   $\mu^1_{M/N}(X)=\mu^1_{L/(N \cap L)}(F)$. 
 \item Assume that $k_1\subset L$ is a finite Galois extension of $k$
   such that $L\cap k_1=k$ and $\cals(L/K)=\{k(a)\subset K\subset L\mid
   K\cap k_1=k\}$. Then $\mu^1_{L/K}$ computed with respect to
   $k_1$ or to $k_L$ give the same result, i.e., for $F\in\cals(L/K)$:
   $$\frac{|\{\bar\tau\in \gal(L/Kk_L)^n\mid \fix({\bar\si}'\bar\tau)=F\}|
 }{[L:Kk_L]^n
 }=
  \frac{|\{\bar\tau\in \gal(L/Kk_1)^n\mid \fix({\bar\si}'\bar\tau)=F\}|
 }{[L:Kk_1]^n
  }.$$

 %%   containing
%%    $k_1$ and contained in $L$. Then $\mu^1$ computed with respect to
%%    $k_1$ or to $k_2$ give the same result, i.e., for $F\in\cals(L/K)$:
%%    $$\frac{|\{\bar\tau\in \gal(L/Kk_1)^n\mid \fix({\bar\si}'\bar\tau)=F\}|
%%  }{[L:Kk_1]^n
%%  }=
%%   \frac{|\{\bar\tau\in \gal(L/Kk_2)^n\mid \fix({\bar\si}'\bar\tau)=F\}|
%%  }{[L:Kk_2]^n
  %%   }.$$
\end{enumerate}
We now assume that for some finite Galois extension $k_1$ of $k$,
$\gal(k)$ is the universal Frattini cover of
$\gal(k_1/k)$. Note that by Fact \ref{fact2}(5), if $L/k(a)$ is Galois and
contains $k_1$, then for any $k(a)\leq K\leq L$, we have $K\cap k_1=k$
implies $K\cap k^s=k$. 

\begin{enumerate}
  \setcounter{enumi}{3}
  \item If $F\in\cals(L/K)$ is maximal, then any
  lift of $\bar\si$ to $\gal(L/F)$ generates $\gal(L/F)$, and $\mu^1_{L/K}(F)>0$.
\item Let $F\in\cals(L/K)$. Then $F$ is maximal if and only if  there is an epimorphism
  $\gal(k)\to \gal(L/F)$. 

\item Define a subextension $k(a)\leq F\leq L$ to be {\em permissible}
  if $F\in\cals(L/k(a))$, and in some $\calk'\equiv_k\calk$ containing $k(a)$, one has $L\cap
  \calk'=F$. Then $F$ is permissible if and only if $F$ is maximal in $\cals(L/k(a))$. 
  \end{enumerate}
\end{lem}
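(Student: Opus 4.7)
The plan is to tackle items (1)--(3) by direct group-theoretic computation and items (4)--(6) by exploiting the universal Frattini cover hypothesis via Facts~\ref{fact2} and~\ref{elementary equivalence theorem}.

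First I would dispense with (1)--(3). For (1), two lifts $\bar\sigma',\bar\sigma''\in\gal(L/K)^n$ of $\bar\sigma|_{k_L}$ (the requirement that $\fix(\bar\sigma'\bar\tau)\supseteq K$ forces the lift to fix $K$) differ by some $\bar\eta\in\gal(L/Kk_L)^n$, and left-translation by $\bar\eta$ is a bijection of $\gal(L/Kk_L)^n$; since $\bar\sigma''\bar\tau=\bar\sigma'\bar\eta\bar\tau$, the count is preserved. For (2), the restriction map $\gal(M/Nk_M)\twoheadrightarrow\gal(L/(N\cap L)k_L)$ has constant fiber size $[M:Nk_M]/[L:(N\cap L)k_L]$, and since $\fix_M(\bar\alpha)\cap L=\fix_L(\bar\alpha|_L)$, summing $\mu^1_{M/N}(N')$ over $N'\in X$ collapses exactly to $\mu^1_{L/(N\cap L)}(F)$ after the fiber factor cancels against the denominator ratio. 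For (3), Fact~\ref{fact2}(5) identifies the two descriptions of $\cals(L/K)$, and a parallel coset calculation organizes $\gal(L/Kk_1)^n$ into cosets of $\gal(L/Kk_L)^n$ so that both normalizations yield the same fraction.

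Next I turn to (4) and (5). The key reformulation for (4) is that $F\in\cals(L/K)$ is maximal iff $\gal(L/F)\to\gal(k_L/k)$ is a Frattini cover. Indeed, under Galois correspondence, proper open subgroups $H_0\lneq\gal(L/F)$ biject with overfields $F_0\supsetneq F$ in $L$, and $H_0$ surjects onto $\gal(k_L/k)$ iff $F_0\cap k_L=k$, giving the equivalence via Definition~\ref{Frattini}(2)(b). Then Definition~\ref{Frattini}(2)(c) ensures any lift in $\gal(L/F)^n$ of the generating tuple $\bar\sigma|_{k_L}$ of $\gal(k_L/k)$ generates $\gal(L/F)$; one such lift has the form $\bar\sigma'\bar\tau$ with $\bar\tau\in\gal(L/Kk_L)^n$ (the two agree on $Kk_L$), giving $\mu^1_{L/K}(F)>0$. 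For (5), the tower $\gal(k)\twoheadrightarrow\gal(k_L/k)\twoheadrightarrow\gal(k_1/k)$ is Frattini by Fact~\ref{fact2}(2), making $\gal(k)$ the universal Frattini cover of $\gal(k_L/k)$ as well; combining the reformulation from~(4) with Fact~\ref{frat fact} yields $F$ maximal $\iff\gal(L/F)$ is a quotient of $\gal(k)$.

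Finally, for (6), the forward direction follows from~(5): if $\calk'\equiv_k\calk$ witnesses permissibility, then the restriction $\gal(\calk')\twoheadrightarrow\gal(L\calk'/\calk')\cong\gal(L/L\cap\calk')=\gal(L/F)$ exhibits $\gal(L/F)$ as a quotient of $\gal(\calk')\cong\gal(k)$, so $F$ is maximal by~(5). Conversely, given maximal $F$, by~(5) we obtain an epimorphism $\alpha:\gal(k)\twoheadrightarrow\gal(L/F)$; invoking Fact~\ref{fact1}(3) together with the Frattini-cover data, I build a perfect PAC field $\calk'\supseteq F$ with $\gal(\calk')\simeq\gal(k)$ whose restriction to $L$ realizes $\alpha$, thereby forcing $L\cap\calk'=F$; Fact~\ref{elementary equivalence theorem} then upgrades this to $\calk'\equiv_k\calk$. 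The main obstacle I expect is precisely this converse: arranging $L\cap\calk'$ to be \emph{exactly} $F$ rather than merely containing it. The universal Frattini property of $\gal(k)$ is what supplies the needed rigidity, since any enlargement of $L\cap\calk'$ beyond $F$ would correspond to a proper closed subgroup of $\gal(L/F)$ still surjecting onto $\gal(k_L/k)$, contradicting maximality through the Frattini-cover characterization in~(4).
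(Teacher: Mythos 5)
Your items (1), (2), (4) and the forward half of (6) follow the paper's argument essentially verbatim (the paper runs (4)--(5) through $k_1$ rather than $k_L$, but under the standing hypothesis the two conditions on subfields of $L$ coincide, so that is a harmless variation), and your converse of (6) is actually spelled out in more detail than in the paper, with the right ingredients (realize the epimorphism by a perfect PAC field $\calk'$ with $\gal(\calk')\simeq\gal(k)$ and use Fact \ref{elementary equivalence theorem}). The genuine gap is in your item (5), in the direction ``$\gal(L/F)$ a quotient of $\gal(k)$ $\Rightarrow F$ maximal''. Fact \ref{fact2}(\ref{frat fact}) only tells you that $\gal(L/F)$, being a quotient of the universal Frattini cover of $\gal(k_L/k)$, is a Frattini cover of \emph{some} quotient of $\gal(k_L/k)$ via \emph{some} epimorphism; what maximality requires, by your own reformulation in (4), is that the specific restriction map $\gal(L/F)\to\gal(k_L/k)$ be a Frattini cover. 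Nothing in Fact \ref{fact2}(\ref{frat fact}) excludes, a priori, that $\gal(L/F)$ surjects onto $\gal(k_L/k)$ by a non-Frattini map while being a Frattini cover of a smaller quotient by a different map, so ``combining (4) with Fact \ref{fact2}(\ref{frat fact})'' does not close this direction.

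The missing ingredient is Gasch\"utz's Lemma, which is exactly what the paper invokes here: since $\gal(L/F)$ is a quotient of the $n$-generated group $\gal(k)$, the generating tuple $\bar\si\rest_{k_1}$ of $\gal(k_1/k)$ lifts along the restriction $\gal(L/F)\to\gal(k_1/k)$ (surjective because $F\cap k_1=k$) to a \emph{generating} tuple of $\gal(L/F)$; this produces an epimorphism $\gal(k)\to\gal(L/F)$ whose composition with the restriction is the universal Frattini cover $\gal(k)\to\gal(k_1/k)$, and then Fact \ref{fact2}(2) forces the restriction $\gal(L/F)\to\gal(k_1/k)$ (equivalently $\to\gal(k_L/k)$) to be a Frattini cover, i.e.\ $F$ is maximal. (Equivalently, the counting form of Gasch\"utz's Lemma shows that whether an epimorphism between two fixed finite groups is a Frattini cover does not depend on the choice of epimorphism, which bridges your gap directly.) A smaller instance of the same point hides in your item (3): the claim that each coset of $\gal(L/Kk_L)^n$ inside $\gal(L/Kk_1)^n$ contributes equally is again a Gasch\"utz-type counting statement rather than a formal coset manipulation, though there the paper itself only says ``simple computation,'' so you are no less explicit than the source.
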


\begin{proof} (1) is clear, since any lift ${\bar\si}''$ of $\bar\si\rest_{k_L}$ to
  $\gal(L/K)$ is a translate of ${\bar\si}'$ by an $n$-tuple in
  $\gal(L/Kk_L)$.

  (2) This is clear, since if $\bar\tau\in\gal(L/(L \cap N)k_L)^n$ is such that
  $\fix({\bar\si}'\cdot\bar\tau)=F$, then ${\bar\si}'\cdot\bar\tau$ has
  exactly $[M:Lk_M]^n$ distinct lifts to $\gal(M/Nk_M)$.

  (3) Simple computation.

  (4) Since $F\in\cals(L/K)$, we know that $\gal(L/F)$ projects onto
  $\gal(k_1/k)$, and by maximality of $F$, that $\gal(L/F)$ is a minimal
  subgroup of $\gal(L/K)$ projecting onto $\gal(k_1/k)$; the first
  assertion tells us that $\bar\si\rest_{k_1}$ lifts to an element ${\bar
    \si}''\in\gal(L/F)$, and the second, that $\langle
  {\bar\si}''\rangle=\gal(L/F)$. In particular $\mu^1_{L/K}(F)>0$.

  (5) Our assumption that $F \in \mathcal{S}(L/K)$ means that $F\cap k_1=k$ and thus that the restriction $\mathrm{Gal}(L/F) \to \mathrm{Gal}(k_{1}/k)$ is an epimorphism.  If $F$ is maximal, then $\gal(L/F)$ is minimal among subgroups of $\gal(L/K)$ that project onto $\gal(k_{1}/k)$ and thus that the map $\gal(L/F) \to \gal(k_{1}/k)$ is a Frattini cover.  Hence, by definition of universal Frattini cover, there is an epimorphism $\gal(k) \to \gal(L/F)$.  For the other direction, if there is an epimorphism $\gal(k) \to \gal(L/F)$, then we can lift $\bar\si\rest_{k_1}$ to a set of
  generators of $\gal(L/F)$ and obtain an epimorphism $\gal(k)\to \gal(L/F)$ which composes with the restriction $\gal(L/F) \to \gal(k_{1}/k)$ to give the universal Frattini cover of $\gal(k_{1}/k)$ (by, e.g., Gasch\"utz's Lemma \cite[Lemma 17.7.2]{FJ}). This implies $\gal(L/F) \to \gal(k_{1}/k)$ is Frattini, so $F$ is maximal. 

  (6) The following maps are onto: $\gal(k)\to \gal(L/F)\to \gal(k_L/k)$,
  (the first one by permissibility of $F$, and because $\gal(k)\simeq \gal(\calk')$), and (5) gives us the desired
  equivalence.  
\end{proof}

\subsection*{\bf Defining  the desired measure} As we saw above in item
(5), permissible fields $F\in \cals(L/K)$ have positive
$\mu^1_{L/K}$-measure, but there
may be others. We must therefore change the definition of the measure,
so that it tends to $0$ on  non-permissible members of $\cals(L/K)$.  We do this by defining a Markov chain on $\mathcal{S}(L/K)$.  

If $X$ is a finite set, then a \emph{Markov chain} on $X$ is a process
which moves among the elements of $X$ in the following fashion:  at
position $x \in X$, the next position in the process is chosen according
to a fixed probability distribution $P(x,-)$ on $X$, which depends only
on $x$. We say that $X$ is the \emph{state space} of the Markov chain
and $P(x,y)$ is the \emph{transition probability} from $x$ to $y$. If $X
= \{x_{1}, \ldots, x_{n}\}$, then the $n \times n$ matrix $P = (p_{ij})$
defined by $p_{ij} = P(x_{i},x_{j})$ is the \emph{transition matrix} of the Markov chain. 

In any Markov chain on $X$, there is an associated pre-order in which $x' \geq x$ if $P(x',x) > 0$ (i.e., if $x$ can be reached from $x'$).  This pre-order induces a partial order on the equivalence classes, where $x$ is equivalent to $x'$ if $x \leq x'$ and $x' \leq x$. The elements of the minimal equivalence classes are called \emph{ergodic} and the elements that are not ergodic are called \emph{transitory}.  When the equivalence class of an ergodic element is a singleton, the element is called \emph{absorbing}; equivalently, a state $x$ is absorbing if $P(x,x) = 1$ \cite[Theorem 2.4.2]{KS}.  

\begin{fact} \label{ergodic} \cite[Theorem 3.1.1]{KS}
    In a Markov chain on a finite set, regardless of the starting position, the probability that, after $m$ steps, the process is in an ergodic state tends to $1$ as $m$ tends to $\infty$. 
\end{fact}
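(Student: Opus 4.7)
The plan is to identify the set $E \subseteq X$ of ergodic states, establish that it is closed under the chain's transitions, and then show that the transitory part $T = X \setminus E$ is exited geometrically fast.

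First, I would verify that once the chain enters $E$, it never leaves. If $y \in E$ and $P(y,z) > 0$, then $y \geq z$ in the pre-order. Since $[y]_\sim$ is a minimal equivalence class by the definition of ergodicity, no class lies strictly below it, forcing $[z]_\sim = [y]_\sim$ and hence $z \in E$. Thus $E$ is an absorbing set.

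Second, I would show that from every transitory state one can reach $E$ with positive probability in a bounded number of steps. The pre-order on $X$ descends to a genuine partial order on the finite set of equivalence classes, so every class dominates some minimal class. Starting at a transitory $x$, I can follow a strictly descending chain of classes down to an ergodic one; concatenating one-step transitions along this chain produces a path of length at most $N := |X|$ from $x$ into $E$, with strictly positive probability. Setting
$$p \;=\; \min_{x \in T}\, \Pr\nolimits_{x}\!\left(\text{chain reaches } E \text{ within } N \text{ steps}\right),$$
the minimum is over a finite collection of strictly positive numbers, so $p > 0$.

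The endgame is then a standard application of the Markov property: conditioning on not having entered $E$ by step $kN$ and running the chain for a further $N$ steps, the probability of still being in $T$ is bounded above by $(1-p)^{k+1}$ for every initial state. Letting $k \to \infty$ yields the desired conclusion. The only real difficulty is keeping the order-theoretic bookkeeping honest: one must use finiteness of $X$ to guarantee termination of the descent through equivalence classes, and the definition of ``ergodic'' (membership in a minimal class) to secure the absorbing property. Once those two structural points are in hand, the geometric-decay estimate is routine.
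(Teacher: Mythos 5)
Your argument is correct, and since the paper does not prove this Fact but simply quotes it from Kemeny--Snell, there is nothing for it to diverge from: what you give is essentially the standard proof in the cited source (the ergodic part $E$ is closed because ergodic states lie in minimal classes, every transient state reaches $E$ within $|X|$ steps with probability at least some $p>0$ by finiteness, and the Markov property then yields geometric decay of the probability of remaining transient, with closedness of $E$ making these probabilities monotone in $m$ so the bound at multiples of $N$ suffices). The only bookkeeping worth making explicit is that the pre-order must be taken as the reachability relation (the reflexive--transitive closure of the one-step relation $P(x',x)>0$), which is what the paper intends and what your descent through equivalence classes uses.
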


\begin{defn} \vlabel{mui} Following Johnson \cite[Definition 5.12]{Jo}, we define for $i\in\mathbb{N}$, the
  following probability measures on elements of $\cals(L/K)$.
  \begin{itemize}
  \item $\mu^0_{L/K}(K)=1$;
  \item $\mu^1_{L/K}$ is as in \ref{first measure};
  \item If $i>0$ and $F\in\cals(L/K)$, then
    $$\mu^{i+1}_{L/K}(F)=\sum_{F'\in\cals(L/K)}\mu^i_{L/K}(F')\mu^1_{L/F'}(F).$$ 
    \item $\mu^{\infty}_{L/K}(F) = \lim_{i} \mu^{i}_{L/K}(F)$.  
    \end{itemize}
\end{defn}

We consider the Markov chain on $\mathcal{S}(L/K)$ where the transition
probability of going from $F$ to $F'$ is given by $\mu^{1}_{L/F}(F')$.
More generally, the probability of getting from $F$ to $F'$ in
at most $i$ steps is $\mu^{i}_{L/F}(F')$.  

\begin{lem} \label{ergodic and maximal}
    In the Markov chain on $S(L/K)$ with transition probability from $F$ to $F'$ given by $\mu^{1}_{L/F}(F')$, the following are equivalent for $F \in \mathcal{S}(L/K)$:
    \begin{enumerate}
        \item $F$ is ergodic.
        \item $F$ is absorbing.
        \item $F$ is a maximal field in $S(L/K)$ (with respect to inclusion). 
    \end{enumerate}
\end{lem}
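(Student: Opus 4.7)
The plan is to establish $(3) \Rightarrow (2) \Rightarrow (1) \Rightarrow (3)$, with $(2) \Rightarrow (1)$ being immediate from the definitions of absorbing and ergodic state. The whole argument hinges on a single structural observation about the Markov chain: \emph{every positive-probability transition from $F$ lands inside $\cals(L/F)$, i.e., in a field containing $F$}. To see this, recall from Lemma \ref{maximal}(1) that we may compute $\mu^1_{L/F}(F')$ using any lift $\bar\si'$ of $\bar\si\rest_{k_L}$; since $F\in\cals(L/K)$ satisfies $F\cap k_L=k$, the restriction $\gal(L/F)\twoheadrightarrow \gal(k_L/k)$ is surjective, so we may choose $\bar\si'\in\gal(L/F)^n$. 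Then for $\bar\tau\in\gal(L/Fk_L)^n$, both factors fix $F$, so $F\subseteq \fix(\bar\si'\cdot\bar\tau)$.

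For $(3)\Rightarrow(2)$: assume $F$ is maximal in $\cals(L/K)$. Any $F'\in\cals(L/F)$ automatically lies in $\cals(L/K)$ (since $F\supseteq K$), so maximality forces $F'=F$. Combined with the observation, every $F'$ with $\mu^1_{L/F}(F')>0$ equals $F$; since $\mu^1_{L/F}$ is a probability measure, $\mu^1_{L/F}(F)=1$, i.e., $F$ is absorbing.

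For $(1)\Rightarrow(3)$: argue by contrapositive. Suppose $F$ is not maximal in $\cals(L/K)$, so there is some $F_0\supsetneq F$ in $\cals(L/K)$. Then $F_0\in\cals(L/F)$, so $\cals(L/F)$ contains a maximal element $F^*$ with $F^*\supseteq F_0\supsetneq F$. Applying Lemma \ref{maximal}(4) to the family $\cals(L/F)$, we obtain $\mu^1_{L/F}(F^*)>0$, so the Markov chain has a positive-probability one-step transition from $F$ to $F^*$. Conversely, every transition out of $F^*$ stays inside $\cals(L/F^*)$, which consists of fields strictly larger than $F$; hence $F$ is unreachable from $F^*$. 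Therefore $F^*$ lies in a strictly smaller equivalence class than $F$, so $F$ is not in a minimal class, i.e., $F$ is not ergodic.

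The only mildly delicate point is identifying which direction needs Lemma \ref{maximal}(4): the implication $(3)\Rightarrow(2)$ is purely formal given the one-way structure of transitions, whereas showing that non-maximal fields are transitory requires the positivity guarantee of Lemma \ref{maximal}(4) to actually produce an ``escape'' to a strictly larger field. No additional machinery should be required beyond the observation that we can choose the lift $\bar\si'$ inside $\gal(L/F)^n$.
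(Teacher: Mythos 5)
Your proof is correct and follows essentially the same route as the paper's: the monotonicity observation that $\mu^1_{L/F}(F')>0$ forces $F'\supseteq F$ (which the paper states as clear from the definition and you justify by choosing the lift in $\gal(L/F)^n$), maximality giving $\mu^1_{L/F}(F)=1$, and Lemma \ref{maximal}(4) supplying the escape to a maximal overfield so that non-maximal fields are not ergodic. Your contrapositive phrasing of $(1)\Rightarrow(3)$ is just a minor repackaging of the paper's ``all equivalence classes are singletons'' argument.
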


\begin{proof}
    (3)$\implies$(2)  It is clear from the definition of the measure that, for $F,F' \in \mathcal{S}(L/K)$, $\mu^{1}_{L/F}(F') > 0$ implies $F' \supseteq F$, hence if $F$ is maximal then we have $\mu^{1}_{L/F}(F) = 1$ so $F$ is absorbing.  
    (2)$\implies$(1) If $F$ is absorbing then it is ergodic, by definition.  
    (1)$\implies$(3)  If $\mu^{1}_{L/F}(F') > 0$ and $\mu^{1}_{L/F'}(F) > 0$, then we must have $F = F'$ so, in the preorder associated to this Markov chain, all equivalence classes are singletons. By Lemma \ref{maximal}(4), for any $F \in \mathcal{S}(L/K)$ and maximal $F' \supseteq F$, we have $\mu^{1}_{L/F}(F') > 0$ so the ergodic elements are maximal. 
\end{proof}

\begin{lem}
    The measure $\mu^{\infty}_{L/K}$ concentrates on the maximal elements of $\mathcal{S}(L/K)$.  Moreover, if $F \in \mathcal{S}(L/K)$ is maximal, then $\mu^{\infty}_{L/K}(F) > 0$.  
\end{lem}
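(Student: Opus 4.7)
The plan is to read off both statements directly from the Markov chain interpretation: by Lemma \ref{ergodic and maximal}, the ergodic states of this chain are precisely the maximal fields in $\mathcal{S}(L/K)$, and they are in fact all absorbing. Combining this with Fact \ref{ergodic} and Lemma \ref{maximal}(4) (which produces positive one-step transition probabilities into maximal fields) should yield both the concentration statement and the positivity.

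Concretely, I would first observe that $\mu^i_{L/K}(F)$ is the probability that the Markov chain, started at $K$, is at $F$ after $i$ steps; this is the content of the recursion in Definition \ref{mui} together with the initial condition $\mu^0_{L/K}(K)=1$. Then, to prove concentration on maximal fields, I would apply Fact \ref{ergodic}: the probability of being in an ergodic state after $i$ steps tends to $1$ as $i\to\infty$. Since by Lemma \ref{ergodic and maximal} the ergodic states are exactly the maximal $F\in\mathcal{S}(L/K)$, this gives $\sum_{F\text{ non-maximal}}\mu^i_{L/K}(F)\to 0$, so in particular $\mu^\infty_{L/K}(F)=\lim_i\mu^i_{L/K}(F)=0$ for every non-maximal $F$ (and the limit certainly exists in that case since each term is nonnegative and the sum goes to $0$).

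For the second assertion, fix a maximal $F\in\mathcal{S}(L/K)$. Since $F$ is absorbing (Lemma \ref{ergodic and maximal}, (3)$\Rightarrow$(2)), we have $\mu^1_{L/F}(F)=1$. Isolating the term $F'=F$ in the recursion gives
\[
\mu^{i+1}_{L/K}(F)=\sum_{F'\in\mathcal{S}(L/K)}\mu^i_{L/K}(F')\mu^1_{L/F'}(F)\;\geq\;\mu^i_{L/K}(F)\cdot 1,
\]
so the sequence $(\mu^i_{L/K}(F))_{i\geq 0}$ is non-decreasing; being bounded by $1$, it converges to $\mu^\infty_{L/K}(F)$. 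By Lemma \ref{maximal}(4), applied to the maximal element $F$ of $\mathcal{S}(L/K)$ itself, we already have $\mu^1_{L/K}(F)>0$, and monotonicity then gives $\mu^\infty_{L/K}(F)\geq \mu^1_{L/K}(F)>0$.

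No step here looks like a genuine obstacle: the substantive work has been done in Lemma \ref{ergodic and maximal} (identifying ergodic with maximal, and moreover with absorbing) and in Lemma \ref{maximal}(4) (which produces a positive one-step transition from $K$ into every maximal extension); the present lemma essentially just combines them with the general fact about convergence to ergodic states. The only point that needs mild care is to check that the limit defining $\mu^\infty_{L/K}(F)$ exists for every $F$, which follows from the above: for non-maximal $F$ the total mass on non-ergodic states tends to $0$, and for maximal $F$ the sequence is monotone bounded.
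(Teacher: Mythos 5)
Your proposal is correct and follows essentially the same route as the paper: concentration via Fact \ref{ergodic} combined with Lemma \ref{ergodic and maximal}, and positivity via Lemma \ref{maximal}(4) together with the observation that the absorbing term $\mu^{1}_{L/F}(F)=1$ makes $(\mu^{i}_{L/K}(F))_i$ non-decreasing, so $\mu^{\infty}_{L/K}(F)\geq\mu^{1}_{L/K}(F)>0$. Your extra remark on the existence of the limits is a harmless elaboration of what the paper leaves implicit.
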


\begin{proof}
    By Fact \ref{ergodic}, $\mu^{\infty}_{L/K}$ concentrates on the set of ergodic elements of $\mathcal{S}(L/K)$ and by Lemma \ref{ergodic and maximal}, this coincides with the set of maximal fields in $\mathcal{S}(L/K)$.  Finally, by Lemma \ref{maximal}(4), we have for maximal $F \in \mathcal{S}(L/K)$, 
    $$
    0 < \mu^{1}_{L/K}(F) \leq \mu^{\infty}_{L/K}(F).
    $$
Indeed, note that by definition, the values of all $\mu^i$ are
  non-negative. It follows that if $F$ is maximal, one of the summands
  in the definition of $\mu^{i+1}_{L/K}(F)$ is
  $\mu^i_{L/K}(F)\mu^1_{L/F}(F)=\mu^i_{L/K}(F)$, and that the
  $\mu^i_{L/K}(F)$ form a non-decreasing sequence. 
This completes the proof. 
\end{proof}

\begin{prop} \vlabel{rational}
    The measure $\mu^{\infty}_{L/K}$ takes  only rational values on $\mathcal{S}(L/K)$.  
\end{prop}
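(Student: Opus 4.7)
The plan is to interpret $\mu^{\infty}_{L/K}$ as the vector of absorption probabilities of an absorbing Markov chain in the classical sense, and to extract rationality from the rationality of the underlying transition matrix.

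By its explicit definition as a ratio of an integer count over $[L:Kk_L]^{n}$, the measure $\mu^{1}_{L/K}$ takes only rational values. By induction on $i$ using the recursion in Definition \ref{mui}, each $\mu^{i}_{L/K}$ is then rational-valued as well. Observe further that this recursion is exactly the Chapman--Kolmogorov equation, so $\mu^{i}_{L/K}(F)$ is precisely the probability that the Markov chain is at $F$ after $i$ steps, starting from $K$.

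Next, I would invoke Lemma \ref{ergodic and maximal}: every ergodic state is absorbing, so our chain is an absorbing Markov chain in the classical sense. Ordering the elements of $\mathcal{S}(L/K)$ with the transitory (non-maximal) states first and the absorbing (maximal) states last, the transition matrix takes the block form
$$
P \;=\; \begin{pmatrix} Q & R \\ 0 & I \end{pmatrix},
$$
where $Q$ and $R$ have rational entries and $I$ is the identity on the absorbing states. By standard theory (see \cite[Chapter 3]{KS}) the matrix $I-Q$ is invertible, with $(I-Q)^{-1} = \sum_{j \geq 0} Q^{j}$, and the $(F,F')$-entry of $B := (I-Q)^{-1}R$ is the probability that the chain, started at the transitory state $F$, is eventually absorbed at the maximal state $F'$. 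Since the inverse of an invertible rational matrix is rational, every entry of $B$ is rational.

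To finish: if $K$ is itself maximal then $\mu^{\infty}_{L/K}$ is the Dirac mass at $K$, which is rational-valued. Otherwise $\mu^{\infty}_{L/K}(F)$ equals $B_{K,F}$ when $F$ is maximal and $0$ when $F$ is transitory, so in either case $\mu^{\infty}_{L/K}$ takes only rational values. The only step requiring care is the reduction to the classical theory of absorbing chains, that is, checking that the recursion in Definition \ref{mui} genuinely computes the $i$-step transition probabilities of the chain of Lemma \ref{ergodic and maximal}; but this is immediate by expanding the recursion, so there is no substantive obstacle.
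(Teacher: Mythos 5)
Your proposal is correct and follows essentially the same route as the paper: both identify $\mu^{\infty}_{L/K}$ with the absorption probabilities of the absorbing Markov chain from Lemma \ref{ergodic and maximal}, write the (rational) transition matrix in block form, and invoke the Kemeny--Snell fundamental-matrix formula $B=(I-Q)^{-1}R$ to conclude rationality. The only differences are cosmetic (you order transitory states first where the paper puts the maximal ones first, and you treat a general base $K$ rather than $k(a)$), so there is nothing substantive to add.
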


\begin{proof}
    Choose an enumeration $F_{1}, \ldots, F_{k}$ of $\mathcal{S}(L/k(a))$ such that $F_{i} \subseteq F_{j}$ implies $i \geq j$.  Thus the maximal elements of $\mathcal{S}(L/k(a))$ form an initial segment of the enumeration and $F_{k} = k(a)$. Let $F_{1}, \ldots, F_{\ell}$ denote the maximal elements in this enumeration. We will let the $k \times k$ matrix $P = (p_{ij})$ be the transition matrix with $p_{ij} = \mu^{1}_{L/F_{i}}(F_{j})$.  Note that for $i,j \leq \ell$, we have $\mu^{1}_{L/F_{i}}(F_{j}) = 0$ if $i \neq j$ and $\mu^{1}_{L/F_{i}}(F_{j}) = 1$ if $i = j$. Moreover, if $i \leq \ell$ and $j > \ell$, then we have $\mu^{1}_{L/F_{i}}(F_{j}) = 0$. Thus, we can express the matrix $P$ as 
$$
P = \left(
\begin{matrix}
    I & 0 \\
    R & Q
\end{matrix}
\right)
$$
where $I$ is the $\ell \times \ell$ identity matrix, $Q$ is a square $(k-\ell) \times (k - \ell)$ matrix, and $R$ is an $\ell \times (k-\ell)$ matrix. By \cite[Theorem 3.2.1]{KS}, the matrix $(I - Q)$ is invertible (one can also see this directly: from our choice of enumeration, $Q$ is a lower-triangular matrix with entries $<1$ along the diagonal, hence $(I-Q)$ is lower triangular with non-zero diagonal entries).  We write $N$ for the matrix $(I - Q)^{-1}$.  

Then by \cite[Theorem 3.3.7]{KS}, the matrix $B = (b_{ij})$, where $b_{ij}$ is the probability that the random process starts at a non-maximal field $F_{i}$ and ends at the maximal field $F_{j}$ (i.e. $b_{ij} = \mu^{\infty}_{L/F_{i}}(F_{j})$), is given by the equation 
$$
B = NR.
$$
It is immediate from the definition of the measure that all entries in $P$, hence in $N$ and $R$ are rational numbers.  Thus each entry in $B$ is rational as well.  We obtain, in particular, that $\mu^{\infty}_{L/k(a)}(F) \in \mathbb{Q}$ for every maximal $F \in \mathcal{S}(L/k(a))$.  Since $\mu^{\infty}_{L/k(a)}(F) = 0$ for all non-maximal $F$, we have proved that $\mu^{\infty}_{L/k(a)}$ takes on only rational values. 
\end{proof}

\begin{lem} \vlabel{infinity extension}
    If $M$ is a Galois extension of $k(a)$ which contains $L$, and
   $F\in \cals(L/K)$, $X=\{N\in\cals(M/K)\mid N\cap L=F\}$, then
   $\mu^{\infty}_{M/K}(X)=\mu^{\infty}_{L/K}(F)$. 
\end{lem}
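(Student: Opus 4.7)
The plan is to prove by induction on $i \in \mathbb{N}$ that for every $F \in \mathcal{S}(L/K)$, writing $X := \{N \in \mathcal{S}(M/K) : N \cap L = F\}$, one has
\[ \mu^{i}_{M/K}(X) = \mu^{i}_{L/K}(F). \]
Since $\mathcal{S}(M/K)$ is finite, passing to the pointwise limit in $i$ then gives the desired equality: the left side tends to $\sum_{N \in X} \mu^{\infty}_{M/K}(N) = \mu^{\infty}_{M/K}(X)$ and the right side tends to $\mu^{\infty}_{L/K}(F)$.

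The base case $i = 0$ is immediate, as both $\mu^{0}_{L/K}$ and $\mu^{0}_{M/K}$ are point masses at $K$ and $K \cap L = K$. For the inductive step, I would unfold the recursion defining $\mu^{i+1}_{M/K}(X) = \sum_{N \in X}\mu^{i+1}_{M/K}(N)$, swap the order of summation, and partition the sum over $N' \in \mathcal{S}(M/K)$ according to the value of $F' := N' \cap L \in \mathcal{S}(L/K)$, obtaining
\[ \mu^{i+1}_{M/K}(X) = \sum_{F' \in \mathcal{S}(L/K)}\ \sum_{\substack{N' \in \mathcal{S}(M/K) \\ N' \cap L = F'}} \mu^{i}_{M/K}(N')\cdot \mu^{1}_{M/N'}(X). \]
The crux is then the application of Lemma \ref{maximal}(2) to each such $N'$: interpreting $\mu^{1}_{M/N'}(X)$ as $\mu^{1}_{M/N'}(X \cap \mathcal{S}(M/N')) = \mu^{1}_{M/N'}(\{N \in \mathcal{S}(M/N') : N \cap L = F\})$, the lemma identifies this value with $\mu^{1}_{L/F'}(F)$ (vanishing unless $F \supseteq F'$, consistently with our convention). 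Since this value depends only on $F'$, it factors out of the inner sum, and the induction hypothesis gives $\sum_{N' \cap L = F'} \mu^{i}_{M/K}(N') = \mu^{i}_{L/K}(F')$, whence
\[ \mu^{i+1}_{M/K}(X) = \sum_{F' \in \mathcal{S}(L/K)} \mu^{i}_{L/K}(F')\,\mu^{1}_{L/F'}(F) = \mu^{i+1}_{L/K}(F) \]
by the recursion for $\mu^{i+1}_{L/K}$.

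The only genuinely nontrivial step is the invocation of Lemma \ref{maximal}(2) inside the sum. This is really a matter of bookkeeping — checking that the relevant fibers in the Markov chain on $\mathcal{S}(M/N')$ match the fibers contemplated in the lemma, via $N' \cap L = F'$ — rather than a substantive obstacle. Once that identification is in place, the rest is the mechanical combinatorics of the recursion combined with the finite limit.
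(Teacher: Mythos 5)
Your proposal is correct and follows essentially the same route as the paper: induction on $i$, with Lemma \ref{maximal}(2) supplying the one-step identity $\mu^{1}_{M/N'}(X)=\mu^{1}_{L/N'\cap L}(F)$ (so that the transition probability depends only on $N'\cap L$), followed by a pointwise limit over the finite set $\mathcal{S}(M/K)$. The only cosmetic differences are that you run the chain of equalities from the $M$-side rather than the $L$-side and start the induction at $i=0$ instead of $i=1$, neither of which changes the substance.
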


\begin{proof}
    Let $r: \mathcal{S}(M/K) \to \mathcal{S}(L /K)$ denote the surjective map $N \mapsto N \cap L$. We prove $\mu^{i}_{M/K}(r^{-1}(\{F\}))=\mu^{i}_{L/K}(F)$ by induction on $i$.  The $i = 1$ case is Lemma \ref{maximal}(2). Assume for induction that it has been shown for some $i \geq 1$.  Note that $F' \in \mathcal{S}(L/K)$ and $N_{0},N_{1} \in \mathcal{S}(M/K)$ satisfy $L \cap N_{0} = L \cap N_{1}$, then 
$$
\mu^{1}_{M/N_{0}}(r^{-1}(\{F'\})) = \mu^{1}_{L/L \cap N_{0}}(F') = \mu^{1}_{L/L \cap N_{1}}(F') = \mu^{1}_{M/N_{1}}(r^{-1}(\{F'\})).
$$
Then we have 
\begin{eqnarray*}
\mu^{i+1}_{L / K}(F) &=& \sum_{F' \in \mathcal{S}(L/K)} \mu^{i}_{L/ K}(F')\mu^{1}_{L/F'}(F) \\
&=& \sum_{F' \in \mathcal{S}(L/K)} \mu^{i}_{M/K}(r^{-1}(\{F'\}))\mu^{1}_{L/F'}(\{F\}) \\ 
&=& \sum_{F' \in \mathcal{S}(L/K)}\; \sum_{N \in r^{-1}(\{F'\})} \mu^{i}_{M/K}(N)\mu^{1}_{M/N}(r^{-1}(\{F\}) )\\
&=& \sum_{N \in \mathcal{S}(M/K)} \mu^{i}_{M/K}(N)\mu^{1}_{M/N}(r^{-1}(\{F\}) ) \\
&=& \sum_{N \in r^{-1}(\{F\})} \mu^{i+1}_{M/K}(N) \\
&=& \mu^{i+1}_{M/K}(r^{-1}(\{F\})).  
\end{eqnarray*}
Then, taking limits, we obtain $\mu^{\infty}_{M/K}(r^{-1}(\{F\})) = \mu^{\infty}_{L/K}(F)$, as desired. 
\end{proof}

\subsection*{The measure on $V$}

Suppose $\varphi(x)$ is a test formula over $k$ and $L/k(a)$ is a finite Galois extension containing $k_{1}$. We say that $L$ is \emph{adequate} for $\varphi(x)$ if $\mathcal{K} \supseteq k(a)$ is a PAC field with $\mathrm{res}: \gal(\mathcal{K}) \to \gal(k)$ an isomorphism, then whether or not $\mathcal{K} \models \varphi(a)$ depends only on $\mathcal{K} \cap L$. There always is some finite Galois $L/k(a)$ which is adequate for $\varphi(x)$; for example, one can take $L$ to be the splitting field of the polynomials that appear in the test sentence $\varphi(a)$. 

In the previous subsection, we defined the measures $\mu^{\infty}_{L/K}$ on $\mathcal{S}(L/K)$.  Now we use these measures to define a Keisler measure $\mu_{V}$ on definable subsets of $V$.  By Corollary \ref{types}, it suffices to define $\mu_{V}(\varphi(x))$ where $\varphi(x)$ is a test formula that defines a subset of $V$. There is a finite Galois extension $L/k(a)$ containing $k_{1}$ which is adequate for $\varphi(x)$ and therefore also a set $X^{L}_{\varphi} \subseteq \mathcal{S}(L/k(a))$ such that a PAC field $\mathcal{K} \supseteq k(a)$, with $\mathrm{res}: \gal(\mathcal{K}) \to \gal(k)$ an isomorphism, will satisfy 
$$
\mathcal{K} \models \varphi(a) \iff \mathcal{K} \cap L \in X^{L}_{\varphi}.
$$
In other words, we choose $L$ such that the truth value of $\varphi(a)$ in $\mathcal{K}$ depends only on $L \cap \mathcal{K}$ and take $X^{L}_{\varphi}$ to be the set of possibilities for $\mathcal{K} \cap L$ in which $\varphi(a)$ is true in $\mathcal{K}$.  Then we set $\mu_{V}(\varphi(x)) = \mu^{\infty}_{L/k(a)}(X^{L}_{\varphi})$. 

In order to show that the measure $\mu_{V}$ is well-defined, we need to prove that the value $\mu_{V}(\varphi(x))$ does not depend on the choice of $L$:

\begin{lem}
    Suppose $L$ and $M$ are finite Galois extensions of $k(a)$ containing $k_{1}$ which are adequate for the test formula $\varphi(x)$ over $k$. Then 
    $$
    \mu^{\infty}_{L/k(a)}(X^{L}_{\varphi}) = \mu^{\infty}_{M/k(a)}(X^{M}_{\varphi})
    $$
    with $X^{L}_{\varphi}$ and $X^{M}_{\varphi}$ defined as above. 
\end{lem}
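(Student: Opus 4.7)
The plan is to reduce to the case where one of the two extensions contains the other by taking the compositum $N = LM$, and showing separately that $\mu^{\infty}_{L/k(a)}(X^{L}_{\varphi}) = \mu^{\infty}_{N/k(a)}(X^{N}_{\varphi})$ and $\mu^{\infty}_{M/k(a)}(X^{M}_{\varphi}) = \mu^{\infty}_{N/k(a)}(X^{N}_{\varphi})$. The key technical input will be Lemma \ref{infinity extension}, which relates the measure on $\cals(N/k(a))$ to that on $\cals(L/k(a))$ via the restriction map $r \colon F \mapsto F \cap L$.

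First, I would check that the compositum $N = LM$ is itself adequate for $\varphi(x)$: for any perfect PAC field $\calk'$ containing $k(a)$ with $\mathrm{res}\colon \gal(\calk') \to \gal(k)$ an isomorphism, $\calk' \cap L$ is determined by $\calk' \cap N$, so adequacy of $L$ for $\varphi(x)$ passes immediately to $N$. Thus $X^{N}_{\varphi} \subseteq \cals(N/k(a))$ is well defined.

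Next, I claim that on the permissible (equivalently, maximal) elements of $\cals(N/k(a))$, the sets $X^{N}_{\varphi}$ and $r^{-1}(X^{L}_{\varphi})$ coincide. If $F \in \cals(N/k(a))$ is maximal, then by Lemma \ref{maximal}(6) it is permissible, so there exists $\calk' \equiv_k \calk$ with $\calk' \cap N = F$, whence $\calk' \cap L = F \cap L$; by the defining properties of $X^{L}_{\varphi}$ and $X^{N}_{\varphi}$, this yields $F \in X^{N}_{\varphi} \iff \calk' \models \varphi(a) \iff F \cap L \in X^{L}_{\varphi}$. Since $\mu^{\infty}_{N/k(a)}$ is supported on maximal elements (by the concentration lemma recalled just before Proposition \ref{rational}), non-permissible fields contribute measure zero irrespective of how the adequacy biconditional classifies them, so $X^{N}_{\varphi}$ and $r^{-1}(X^{L}_{\varphi})$ have the same $\mu^{\infty}_{N/k(a)}$-measure. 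Applying Lemma \ref{infinity extension} once for each $F \in X^{L}_{\varphi}$ and summing then gives
\[
\mu^{\infty}_{N/k(a)}(X^{N}_{\varphi}) \;=\; \mu^{\infty}_{N/k(a)}(r^{-1}(X^{L}_{\varphi})) \;=\; \sum_{F \in X^{L}_{\varphi}} \mu^{\infty}_{L/k(a)}(F) \;=\; \mu^{\infty}_{L/k(a)}(X^{L}_{\varphi}).
\]
The identical argument with $M$ in place of $L$ produces the matching identity, yielding the result. The main (mild) obstacle is the bookkeeping around non-permissible fields, whose membership in $X^{N}_{\varphi}$ is not fully pinned down by the adequacy biconditional; this is handled uniformly by the support statement for $\mu^{\infty}$.
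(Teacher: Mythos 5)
Your argument is correct and is essentially the paper's proof: the paper likewise reduces to the nested case $L \subseteq M$ (which your compositum $N = LM$ accomplishes) and then applies Lemma \ref{infinity extension} after identifying $X^{L}_{\varphi}$ with the trace of $X^{M}_{\varphi}$. Your explicit handling of the non-permissible fields via Lemma \ref{maximal}(6) and the concentration of $\mu^{\infty}$ on maximal elements just spells out what the paper compresses into ``unraveling definitions,'' so no further changes are needed.
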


\begin{proof} It suffices to prove this in the case that $L \subseteq M$. Unraveling definitions, we have 
    $$
    X^{L}_{\varphi} = \{F \cap L : F \in X^{M}_{\varphi}\}
    $$
    so we have $\mu^{\infty}_{L/k(a)}(X^{L}_{\varphi}) = \mu^{\infty}_{M/k(a)}(X^{M}_{\varphi})$, by Lemma \ref{infinity extension}.
\end{proof}

%%   \orange{in Proposition 3.15 of} \cite{ChRa}, the rational map $f: V \to W$ map defines a
%% $k$-automorphism of $k(V)= k(W)$. As $k(V)$ is regular over $k$, $f$ 
%% extends to a $k^{alg}$-automorphism $\tilde f$ of $k(V)^{alg}$; then
%% $\tilde f$ induces a continuous automorphism of $\gal(k(V))$, which
%% induces the identity on $\gal(k)$. I.e., if $L$ is a finite Galois
%% extension of $k(V)$, then $\tilde f(L)$ is a finite Galois extension of
%% $k(W)$, which intersects $k^{alg}$ in $k_L=k^{alg}\cap L$, and
%% $\gal(\tilde f(L))/k(W))$ and $\gal(L/k(V))$ are isomorphic, by an
%% isomorphism which is the identity on $\gal(k_L/k)$.  This implies $\mu_{V}(S) = \mu_{W}(f(S))$. 

\section{Perfect bounded PAC fields and perfect Frobenius fields}

In this section, we turn our attention to the construction of measures on arbitrary bounded perfect PAC fields, and also perfect Frobenius fields. As a first step, we show, using the model theory of the inverse system of their absolute Galois groups, that these fields are elementarily equivalent to ultraproducts of perfect PAC fields whose absolute Galois groups are universal Frattini covers of a finite groups.  Then we obtain measures on definable sets via ultralimit measures, using the construction from the previous section.

%% \marginpar{\red{We need to introduce the systems associated to a profinite
%%   group, and give a good reference. } }
\begin{lem} \vlabel{bounded ultraproduct}
Suppose $G$ is a bounded projective group.  Then $S(G)$
is an ultraproduct of inverse systems $S(\tilde{G_{i}})$, where
{each} $\tilde{G_{i}}$ is the universal Frattini cover of a finite group. 
\end{lem}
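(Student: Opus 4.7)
The plan is to show that $S(G)$ is elementarily equivalent to an ultraproduct of systems $S(\tilde{G}_n)$, by approximating $G$ by its finite quotients and passing to the corresponding universal Frattini covers. For each $n\in\mathbb{N}$, let $M_n\lhd G$ be the intersection of all open normal subgroups of $G$ of index $\leq n$; by smallness of $G$, this is open of finite index, so $G_n:=G/M_n$ is a finite group. Let $\tilde{G}_n$ denote its universal Frattini cover (Fact~\ref{fact2}(1)). Since $G$ is projective and $\tilde{G}_n\twoheadrightarrow G_n$ is an epimorphism, the quotient $G\twoheadrightarrow G_n$ lifts to a continuous homomorphism $\pi_n:G\to\tilde{G}_n$; by condition (b) of Definition~\ref{Frattini}(2), the subgroup $\pi_n(G)$ projects onto $G_n$ and must therefore equal all of $\tilde{G}_n$, so $\pi_n$ is surjective. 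By construction, $\ker\pi_n\subseteq M_n$.

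The heart of the argument is: for every $n\geq m$, the map $\pi_n$ induces an isomorphism between the truncations of $S(G)$ and $S(\tilde{G}_n)$ to sorts $\leq m$. Indeed, any open normal $N\lhd G$ with $[G:N]\leq m$ contains $M_n$ (by definition of $M_n$ and $n\geq m$), hence contains $\ker\pi_n$, so $\pi_n$ descends to a group isomorphism $G/N\to\tilde{G}_n/\pi_n(N)$; the correspondence $N\leftrightarrow\pi_n(N)$ is then an index-preserving bijection between open normal subgroups of index $\leq m$ on the two sides. The relations $\leq,C,P,\sim$ of the language are all sort-bounded (for instance $gN\leq hM$ forces $[G:M]\leq[G:N]$), so this bijection assembles into an isomorphism of the truncated complete systems, preserving the full many-sorted structure.

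Now any sentence $\varphi$ of the language of complete systems quantifies over only finitely many sorts, so its truth value is determined by the sort-$\leq m$ part of the model for some $m$. By the isomorphism above, $S(G)\models\varphi$ if and only if $S(\tilde{G}_n)\models\varphi$ for every $n\geq m$. Thus, for any non-principal ultrafilter $\mathcal{U}$ on $\mathbb{N}$, \L{}o\'s' theorem yields $\prod_n S(\tilde{G}_n)/\mathcal{U}\equiv S(G)$, delivering the desired representation. The main technical point to verify carefully is the chain of containments $\ker\pi_n\subseteq M_n\subseteq N$ for all $N$ of index $\leq m$: this is precisely what delivers the bijection of normal subgroups, and it rests on the combination of projectivity of $G$, the Frattini lifting property, and the deliberate choice of $G_n$ as the quotient of $G$ capturing exactly the index-$\leq n$ data.
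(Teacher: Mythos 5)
Your argument is essentially the paper's own proof: your $G_n=G/M_n$ is exactly the paper's $G(S_n)$ (the quotient of $G$ dual to the sort-$\leq n$ part of $S(G)$), and, like the paper, you use projectivity of $G$ to obtain epimorphisms $G\twoheadrightarrow\tilde{G}_n$ lifting $G\twoheadrightarrow G_n$ through the universal Frattini cover and then identify the sort-$\leq m$ truncations of $S(G)$ and $S(\tilde{G}_n)$ for $n\geq m$ before passing to a non-principal ultraproduct. The only (minor, easily repaired) discrepancy is that you conclude merely $\prod_n S(\tilde{G}_n)/\mathcal{U}\equiv S(G)$ while the lemma asserts that $S(G)$ \emph{is} such an ultraproduct; your truncation isomorphisms already yield this, since each sort of the ultraproduct is the ultraproduct of the corresponding sorts, which for almost all $n$ coincide with the fixed finite structure $S(G)_{\leq m}$, whence $S^*_{\leq m}\simeq S(G)_{\leq m}$ for all $m$ and $G(S^*)\simeq G$ (alternatively, elementary equivalence together with smallness of $G$ gives the same conclusion via Fact \ref{fact1}(1)).
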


\begin{proof} For each $i$, let $S_i$ be the complete subsystem of $S(G)$ generated by the
  elements of $S(G)$ of sort $\leq i$,  let $G_i=G(S_i)$, $\pi_i:G\to G_i$ the map dual to the
  inclusion $S_i\subset S(G)$, and let $\tau_i: \tilde G_i\to G_i$ be the universal Frattini
  cover of $G_i$. Note that trivially $S(G)=\bigcup S(G_i)$.\\[0.05in]
{\bf Claim}. The elements of sort $\leq i$ of $S(\tilde G_i)$
  are exactly the elements of sort $i$ of $S(G)$. 

\smallskip\noindent{\em Proof of the claim}.  Since $G$ is
  projective, we know that there is an onto map $\rho_i: G\to \tilde
  G_i$ such that $\tau_i\circ \rho_i=\pi_i$. Hence $S(\tilde G_i)$
  embeds into $S(G)$, and this proves the claim. \qed
  
Note that it also shows that for $j\geq i$, the elements of $S(\tilde
  G_j)_{\leq i}$ are exactly the elements of sort $\leq i$ of $S(G_i)$, since
  $S(G_i)\leq S(G_j)$.
Hence, if $\mathcal{D}$ is a non-principal ultrafilter on $\mathbb{N}^*$, and $$ S^{*} = \prod_{i \in {\mathbb N}^*}
S(\tilde{G}_{i})/\mathcal{D},$$ then for all $i$, for all $j\geq i$,
$S(\tilde G_j)_{\leq i}=S(G)_{\leq i}$, and therefore the same is true in
$S^*$: $S^*_{\leq i} \simeq S(G)_{\leq i}$. This shows that
$G(S^*)\simeq G$, and finishes the proof. \end{proof}

{\begin{cor} \vlabel{bounded PAC cor} Let $\calk$ be a bounded perfect PAC field. Then $\calk$ is
  elementarily equivalent to an ultraproduct of perfect PAC fields
  $\calk_i$, with $\gal(\calk_i)$ the universal Frattini cover of a
  finite group (use \cite[Thm 30.6.3]{FJ}).
  \end{cor}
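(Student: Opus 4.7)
The plan is to exhibit the required ultraproduct directly by combining Lemma~\ref{bounded ultraproduct} with the Lubotzky--van~den~Dries realization theorem and the elementary equivalence theorem for perfect PAC fields cited in the hint.

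First, I would apply Lemma~\ref{bounded ultraproduct} to $G=\gal(\calk)$, which is small (since $\calk$ is bounded) and projective (Fact~\ref{fact1}(2), since $\calk$ is PAC). This yields a sequence of finite groups $(G_i)_{i\in\mathbb{N}^*}$ and a non-principal ultrafilter $\mathcal{D}$ on $\mathbb{N}^*$ such that $S(G)\simeq \prod_i S(\tilde G_i)/\mathcal{D}$. Since each $\tilde G_i$ is projective (Fact~\ref{fact2}(1)), I would then invoke Fact~\ref{fact1}(3) to realize each $\tilde G_i$ as the absolute Galois group of a PAC field $\calk_i$ of the same characteristic as $\calk$; passing to the perfection if necessary (which does not change the absolute Galois group), I may assume each $\calk_i$ is perfect.

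Next, I would form the ultraproduct $\calk^*=\prod_i \calk_i/\mathcal{D}$. Being a perfect PAC field of a given characteristic is first-order, so $\calk^*$ is a perfect PAC field of the same characteristic as $\calk$. Using the interpretability of the complete system $S(\gal(-))$ in a PAC field (see the appendix of \cite{C-omega}) together with \L{}o\'s's theorem, I would identify $S(\gal(\calk^*))$ with $\prod_i S(\gal(\calk_i))/\mathcal{D}=\prod_i S(\tilde G_i)/\mathcal{D}\simeq S(\gal(\calk))$, and dualize to conclude $\gal(\calk^*)\simeq \gal(\calk)$. At this point, $\calk$ and $\calk^*$ are both perfect PAC fields of the same characteristic (hence of the same imperfection degree $0$) with isomorphic absolute Galois groups, and Theorem~30.6.3 of \cite{FJ} delivers $\calk\equiv \calk^*$.

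The main obstacle I anticipate is the identification of $\gal(\calk^*)$ with $\gal(\calk)$: the interpretation of the complete system in a PAC field is only canonical up to conjugation, so one must be careful that \L{}o\'s's theorem together with Lemma~\ref{bounded ultraproduct} genuinely produces an \emph{isomorphism} $S(\gal(\calk^*))\simeq S(\gal(\calk))$ of inverse systems rather than mere elementary equivalence. This is where smallness plays its role: by Fact~\ref{fact1}(1), knowing that $\gal(\calk)$ is small and that the two groups have the same finite images is enough to upgrade to an isomorphism. Once this is in place, the conclusion is immediate from the cited FJ theorem.
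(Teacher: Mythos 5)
There is a genuine gap at the final step. Having arranged that $\calk^{*}=\prod_i\calk_i/\cald$ is a perfect PAC field of the same characteristic as $\calk$ with $\gal(\calk^{*})\simeq\gal(\calk)$ is \emph{not} enough to conclude $\calk\equiv\calk^{*}$: both Fact~\ref{elementary equivalence theorem} and Theorem~30.6.3 of \cite{FJ} also require an isomorphism of the algebraic parts (the intersections with the algebraic closure of the prime field) which is compatible with the identification of the Galois-theoretic data, and your construction gives no control whatsoever over $\calk_i\cap\mathbb{Q}^{alg}$ (resp. $\mathbb{F}_p^{alg}$), since the bare Lubotzky--van den Dries theorem only prescribes the absolute Galois group. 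The standard counterexample is pseudo-finite fields: all perfect PAC fields of characteristic $0$ with absolute Galois group $\hat{\mathbb{Z}}$, yet two of them with different absolute numbers (say one containing $\sqrt{2}$ and one not) are not elementarily equivalent. In your setup nothing prevents almost all $\calk_i$ from containing $\sqrt{2}$ while $\calk$ does not, in which case $\calk\not\equiv\calk^{*}$ even though the Galois groups match; the smallness upgrade of the complete-system identification does not repair this, because the problem is not the group but how it sits over the prime field.

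The repair, which is what the paper implicitly does (compare the proof of Theorem~\ref{thm1}, ``we may assume that $G_i$ projects onto $\gal(k^s\cap L/k)$'', and the proof of Proposition~\ref{efree}), is to choose the $\calk_i$ with prescribed algebraic part. Since the proof of Lemma~\ref{bounded ultraproduct} produces an epimorphism $\rho_i:\gal(\calk)\to\tilde G_i$ and $\tilde G_i$ is projective, $\rho_i$ splits, so $\tilde G_i$ is isomorphic to a closed subgroup $H_i\leq\gal(\calk)$ which still maps onto $\gal(L/\calk)$ for every finite Galois extension $L/\calk$ of degree $\leq i$. Taking $\calk_i=\fix(H_i)$ gives a perfect PAC \emph{algebraic extension} of $\calk$ with $\gal(\calk_i)\simeq\tilde G_i$ and $\calk_i\cap L=\calk$ for all such $L$; hence for every fixed algebraic $\alpha\notin\calk$ one has $\alpha\notin\calk_i$ for all large $i$, so (as $\cald$ is non-principal) the ultraproduct has the same absolute numbers as $\calk$ and the isomorphism of complete systems is compatible with restriction to them. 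With this choice your remaining steps (\L{}o\'s for $S(\gal(-))$, the smallness upgrade, and the appeal to \cite[Thm 30.6.3]{FJ}) go through; alternatively one can invoke the stronger form of the Lubotzky--van den Dries realization theorem that allows prescribing the restriction map to the Galois group of a given subfield.
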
 }

%% \begin{fact} \cite[Theorem 2.3]{C-IP??} 
%%  In the language of inverse systems, there is a theory $T_{\rm IP}$ such that $S(G) \models T_{IP}$ if and only if $G$ is super-projective. If $G$ is a super-projective group, then $\mathrm{Th}(S(G))$ is axiomatized by $T_{IP}$, plus the specification of $\mathrm{Im}(G)$. 
%% \end{fact}

\begin{lem} \vlabel{pseudo small}
   Suppose $G$ is a superprojective profinite group.  Then $S(G)$ is
   elementarily equivalent to an ultraproduct $\prod_{i \in \omega}
   S(\tilde{G}_{i})/\cald$ where each $\tilde{G}_{i}$ is the universal
   Frattini cover of a finite group, and which we can take with the
   embedding property. 
\end{lem}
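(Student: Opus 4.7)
The plan is to adapt the proof of Lemma~\ref{bounded ultraproduct} to the superprojective (possibly unbounded) setting. The additional ingredients are Fact~\ref{embedding cover facts}(1), which provides a finite cover with the embedding property for any finite group, and Fact~\ref{embedding cover facts}(2), which ensures that its universal Frattini cover is superprojective.

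After reducing via downward L\"owenheim--Skolem to the case that $S(G)$ is countable, I would enumerate a cofinal chain of finite subsets $A_{1}\subseteq A_{2}\subseteq\cdots\subseteq S(G)$ with $\bigcup_{i}A_{i}=S(G)$. For each $i$, let $S_{i}$ be the complete subsystem of $S(G)$ generated by $A_{i}$; since finitely many cosets generate only finitely many open normal subgroups of finite index together with their finitely many cosets, $S_{i}$ is finite, and $G_{i}:=G(S_{i})$ is a finite quotient of $G$. Using Fact~\ref{embedding cover facts}(1), I would pick a finite cover $H_{i}\twoheadrightarrow G_{i}$ with the embedding property, and take $\tilde{H}_{i}$ to be its universal Frattini cover, which is superprojective by Fact~\ref{embedding cover facts}(2). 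I would then form the ultraproduct $\prod_{i\in\omega}S(\tilde{H}_{i})/\cald$ over a non-principal ultrafilter $\cald$ on $\omega$, and apply \L o\'s's theorem to establish elementary equivalence with $S(G)$.

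The main obstacle is this last step. Both $S(G)$ and each $S(\tilde{H}_{i})$ satisfy $T_{\rm Proj}\cup T_{\rm IP}$, so the ultraproduct does too, and what remains is to match the finite-image profiles. In one direction, cofinality of the $A_{i}$'s ensures that every finite quotient of $G$ eventually appears in $\mathrm{Im}(G_{i})\subseteq\mathrm{Im}(\tilde{H}_{i})$. In the other direction, to avoid extraneous finite images in the ultraproduct, one needs $\mathrm{Im}(\tilde{H}_{i})\subseteq\mathrm{Im}(G)$ for $\cald$-many $i$. Since $\mathrm{Im}(G)$ is closed under Frattini covers (by projectivity of $G$ together with the generator-lifting property in Definition~\ref{Frattini}(c)), it suffices to arrange that $H_{i}$ itself is a quotient of $G$. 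The existence of such an $H_{i}$---a finite quotient of $G$ refining $G_{i}$ and carrying the embedding property---uses the embedding property of $G$ in an iterative cover-filling construction, and is the technical heart of the argument. Once the finite-image profiles match, elementary equivalence between $S(G)$ and the ultraproduct follows from the completeness of the theory of $S(H)$ for superprojective $H$ with fixed $\mathrm{Im}(H)$.
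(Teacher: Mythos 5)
Your overall route is the same as the paper's: reduce to countable $S(G)$ by downward L\"owenheim--Skolem, exhaust $S(G)$ by finite complete subsystems $S_i$ with finite quotients $G_i=G(S_i)$, pass to finite covers with the embedding property and then to their universal Frattini covers, form the ultraproduct, and reduce elementary equivalence to matching the finite-image profiles, using that for groups satisfying $T_{\rm IP}\cup T_{\rm Proj}$ the theory of the complete system is determined by the set of finite images. The easy inclusion (every $A\in\mathrm{Im}(G)$ eventually lies in $\mathrm{Im}(G_i)\subseteq\mathrm{Im}(\tilde{H}_i)$) and the closure of $\mathrm{Im}(G)$ under Frattini covers (projectivity of $G$ plus the Frattini-cover property) are handled correctly.

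The gap is precisely the step you label the technical heart and then do not carry out: producing, for $\cald$-many $i$, a \emph{finite} group $H_i$ with the embedding property which covers $G_i$ and is a quotient of $G$, so that $\mathrm{Im}(\tilde{H}_i)\subseteq\mathrm{Im}(G)$. Your sketched ``iterative cover-filling construction'' does not obviously deliver this: the embedding property of $G$ lets you solve one embedding problem at a time inside $\mathrm{Im}(G)$, and iterating produces an increasing chain of finite quotients of $G$ whose inverse limit has the embedding property---but that limit is in general infinite, and nothing in your sketch forces the iteration to stabilize at a finite stage. Finiteness is exactly what the statement requires (the groups $\tilde{H}_i$ must be universal Frattini covers of \emph{finite} groups), and it is exactly what the embedding property of $G$ alone does not provide. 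The paper instead takes $G_i$'s finite embedding cover supplied by Fact~\ref{embedding cover facts}(1) (Fried--Jarden's embedding-cover theory, which is where finiteness comes from) and uses that this particular cover is an image of $G$, whence by projectivity and the Frattini property its universal Frattini cover is a quotient of $G$ and contributes no extraneous finite images. As written, your argument needs either that input made explicit or a genuine proof that your iteration terminates in a finite group; without one of these, the inclusion $\mathrm{Im}(G(S_*))\subseteq\mathrm{Im}(G)$, and hence the elementary equivalence, is not established.
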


\begin{proof}
In the case that $G$ is bounded, this is Lemma \ref{bounded
  ultraproduct}. By the downward L\"owenheim-Skolem Theorem, we may
assume $S(G)$ is countable and choose an increasing sequence $S_{0}
\subseteq S_{1} \subseteq \ldots \subseteq S(G)$ of finite substructures
of $S(G)$ with $\bigcup S_{n} = S(G)$. By dualizing, this corresponds to a sequence of finite groups $G(S_{i}) \in \mathrm{Im}(G)$ with epimorphisms 
$$
G(S_{0}) \twoheadleftarrow G(S_{1}) \twoheadleftarrow G(S_{2}) \twoheadleftarrow \ldots
$$
whose inverse limit is $G$. Let $G_{i}$ be the (finite) embedding cover of $G(S_{i})$ (which exists by Fact \ref{embedding cover facts}(1)) and then let $\tilde{G}_{i}$ be the universal Frattini cover of $G_{i}$. Note that $\tilde{G}_{i}$ is superprojective by Fact \ref{embedding cover facts}(2). Let $\mathcal{D}$ be a non-principal ultraproduct on $\omega$ and let $S_{*} = \prod_{i \in \omega} S(\tilde{G}_{i})/\mathcal{D}$. 

We know $S_{*} \models T_{\rm IP}\cup T_{\rm Proj}$ (i.e. $G(S_{*})$ is a superprojective profinite group) so, to conclude, it suffices to prove that $\mathrm{Im}(G(S_{*})) = \mathrm{Im}(G)$.  Note that, by the choice of the $S_{i}$'s, if $A \in \mathrm{Im}(G)$, then there is some $i$ such that $A \in \mathrm{Im}(G_{i})$ and hence in $\mathrm{Im}(\tilde{G}_{j})$ for all $j \geq i$. This entails $A \in \mathrm{Im}(G(S_{*}))$. On the other hand, if $A \in \mathrm{Im}(G(S_{*}))$, there is a set $X \in \mathcal{D}$ such that $i \in X$ implies $A \in \mathrm{Im}(\tilde{G}_{i})$.  But since $G$ is projective and $G_{i}$ is an image of $G$, $\tilde{G}_{i}$ is also a quotient $G$ so we have $A \in \mathrm{Im}(G)$.  Hence $S_{*} \equiv S(G)$. 
\end{proof}

\begin{cor} Let $\calk$ be a perfect Frobenius field. Then $\calk$ is
  elementarily equivalent to an ultraproduct of perfect Frobenius fields
  $\calk_i$, with $\gal(\calk_i)$ the universal Frattini cover of a
  finite group with the embedding property (as in Corollary \ref{bounded PAC cor}, use \cite[Thm 30.6.3]{FJ}).
  \end{cor}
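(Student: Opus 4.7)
My plan is to mirror the proof of Corollary \ref{bounded PAC cor}, substituting Lemma \ref{pseudo small} for Lemma \ref{bounded ultraproduct} at the inverse-system level, and then invoking the same elementary equivalence theorem from Fried--Jarden to transfer the equivalence back to the fields.

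The first step is to apply Lemma \ref{pseudo small} to $G = \gal(\calk)$, which is superprojective since $\calk$ is a perfect Frobenius field. This yields a sequence of finite groups $(G_i)_{i\in\omega}$ and a non-principal ultrafilter $\cald$ on $\omega$ such that each universal Frattini cover $\tilde G_i$ has the embedding property, and
$$S(\gal(\calk)) \equiv \prod_{i \in \omega} S(\tilde G_i)/\cald.$$
Next, each $\tilde G_i$ is projective (being a universal Frattini cover) and has the embedding property, so is superprojective. By Fact \ref{fact1}(3) (Lubotzky--van den Dries), I can realize each $\tilde G_i$ as $\gal(\calk_i)$ for some perfect PAC field $\calk_i$; superprojectivity of $\tilde G_i$ then makes $\calk_i$ a perfect Frobenius field of the appropriate form.

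Set $\calk^* = \prod_{i \in \omega} \calk_i/\cald$, which is automatically a perfect PAC field by \L o\'s. To conclude $\calk \equiv \calk^*$, I invoke \cite[Theorem 30.6.3]{FJ}, precisely as in Corollary \ref{bounded PAC cor}. This requires that both fields have the same characteristic (one selects $\calk_i$ in the appropriate characteristic when applying Fact \ref{fact1}(3)) and that their absolute Galois groups have elementarily equivalent inverse systems. The latter point follows because the interpretability of $S(\gal(-))$ in a PAC field commutes with ultraproducts, so that
$$S(\gal(\calk^*)) \equiv \prod_{i\in\omega} S(\gal(\calk_i))/\cald = \prod_{i\in\omega} S(\tilde G_i)/\cald \equiv S(\gal(\calk)).$$

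The main obstacle, as in the bounded case, is not the combinatorics of the inverse systems (already handled in Lemma \ref{pseudo small}) but rather pinning down exactly the hypotheses of \cite[Thm 30.6.3]{FJ}. Unlike Fact \ref{elementary equivalence theorem}, Theorem 30.6.3 is designed to handle PAC fields whose absolute Galois groups are not necessarily small, using elementary equivalence of the inverse systems in place of an isomorphism; verifying its applicability amounts to the interpretability statement above together with the matching of characteristics and prime fields, which is standard.
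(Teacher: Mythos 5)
Your overall route is the one the paper intends: Lemma \ref{pseudo small} at the level of complete systems, realization of the groups $\tilde G_i$ as absolute Galois groups of perfect PAC (hence Frobenius) fields, and then \cite[Thm 30.6.3]{FJ} to pass from equivalence of inverse systems to equivalence of fields. However, there is a genuine gap at the realization/transfer step. You take $\calk_i$ to be an arbitrary perfect PAC field with $\gal(\calk_i)\simeq \tilde G_i$ supplied by Fact \ref{fact1}(3), and you treat the remaining hypotheses of \cite[Thm 30.6.3]{FJ} as ``same characteristic and prime field, which is standard.'' That is not enough: for perfect PAC fields, equal characteristic together with elementarily equivalent complete systems does \emph{not} determine the elementary theory. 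The isomorphism type of the relative algebraic closure of the prime field inside the field (the ``absolute numbers''), and its compatibility with the Galois-theoretic data, is an independent invariant --- all pseudo-finite fields of characteristic $0$ have complete system $S(\hat{\mathbb{Z}})$, yet they are not all elementarily equivalent. So with an unconstrained choice of $\calk_i$, the ultraproduct $\prod\calk_i/\cald$ may contain (or omit) algebraic elements that $\calk$ omits (or contains), and $\calk\equiv\prod\calk_i/\cald$ fails; correspondingly, the hypotheses of \cite[Thm 30.6.3]{FJ} concerning the algebraic parts cannot be verified for your $\calk_i$.

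The repair, which is what the paper implicitly does (compare the proof of Theorem \ref{thm1}, where the $G_i$ are arranged to project onto $\gal(k^s\cap L/k)$, and the proof of Proposition \ref{efree}, where $\tilde G_i$ is lifted to a closed subgroup of $\gal(k)$), is to tie the $\calk_i$ to $\calk$ itself rather than invoke Lubotzky--van den Dries abstractly. After passing to a countable elementary submodel as in Lemma \ref{pseudo small}, the proof of that lemma shows each $\tilde G_i$ is a quotient of $\gal(\calk)$; since $\tilde G_i$ is projective, the quotient map splits, so $\tilde G_i$ embeds as a closed subgroup $H_i\leq\gal(\calk)$ whose image in $G(S_i)$ is all of $G(S_i)$. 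Taking $\calk_i$ to be the fixed field of $H_i$ in $\calk^s$ gives an algebraic extension of $\calk$, hence a perfect PAC field, Frobenius because $\tilde G_i$ is superprojective, with $\gal(\calk_i)\simeq\tilde G_i$; moreover $\calk_i\cap N=\calk$ for every finite Galois extension $N/\calk$ of degree $\leq i$, so the algebraic parts of the $\calk_i$ converge along $\cald$ to that of $\calk$ and the compatibility data required by \cite[Thm 30.6.3]{FJ} is available. With that choice your argument goes through and coincides with the paper's.
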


As a corollary we get the following result: 

\begin{thm}\vlabel{thm1} Let $\calk$ be a perfect PAC field, and $k$ a relatively
  algebraically closed subfield of $\calk$. Let $V$ be an absolutely
  irreducible variety defined over $k$. In each of the following cases,
  there is a probability measure $\mu_V$ defined on the definable
  subsets of $V(\calk)$:
  \begin{itemize}
  \item[(a)] $\gal(k)$ is the Frattini cover of some finite group, and
    the restriction map $\gal(\calk)\to\gal(k)$ is an isomorphism.
  \item[(b)] $\gal(k)$ is small, and the restriction map $\gal(\calk)\to
    \gal(k)$ is an isomorphism.
  \item[(c)] $\calk$ is Frobenius.
    \end{itemize}
  \end{thm}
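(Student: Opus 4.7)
The plan is to handle case (a) by invoking the construction of Section 3, and to reduce cases (b) and (c) to case (a) via the ultraproduct decompositions from Corollary \ref{bounded PAC cor} and the corollary to Lemma \ref{pseudo small}.

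\emph{Case (a)} is precisely the situation in which the measure $\mu_V$ was built throughout Section 3: for a test formula $\varphi(x)$ defining a subset of $V$, fix an adequate finite Galois extension $L/k(a)$ containing $k_1$ and set $\mu_V(\varphi) := \mu^{\infty}_{L/k(a)}(X^L_\varphi)$, with independence of the choice of $L$ ensured by the final lemma of Section 3 (which uses Lemma \ref{infinity extension}) and with Corollary \ref{types} guaranteeing that this assignment extends consistently to a Keisler measure on the definable subsets of $V(\calk)$.

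For \emph{cases (b) and (c)}, apply Corollary \ref{bounded PAC cor} (resp.\ the corollary to Lemma \ref{pseudo small}) to write $\calk \equiv \calk^* := \prod_{i \in I}\calk_i/\cald$ with each $\calk_i$ a perfect PAC (Frobenius in (c)) field whose absolute Galois group is the universal Frattini cover of a finite group (with the embedding property in (c)). Each $\calk_i$ thus falls under case (a), yielding a Keisler measure $\mu_{V_i}$ on definable subsets of $V_i(\calk_i)$, where $V_i$ denotes the transferred variety and a base field $k_i \subset \calk_i$ is chosen coherently. Given a definable subset of $V(\calk)$ cut out by a formula $\varphi$, transfer $\varphi$ through the elementary equivalence $\calk \equiv \calk^*$ and through the ultraproduct to obtain a $\cald$-class of formulas $(\varphi_i)$, and define $\mu_V(\varphi) := \lim_{i \to \cald}\mu_{V_i}(\varphi_i)$. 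Finite additivity and normalization descend from the $\mu_{V_i}$'s by the stability of ultralimits under convex combinations.

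The main obstacle is the coherent choice of the base fields $k_i \subset \calk_i$ and transferred varieties $V_i$: for $\cald$-almost all $i$ we require $k_i$ relatively algebraically closed in $\calk_i$, with $\calk_i/k_i$ regular, $\gal(\calk_i) \to \gal(k_i)$ an isomorphism, $\gal(k_i)$ the universal Frattini cover of a finite group, and the parameters of $\varphi$ together with the defining equations of $V$ all realized over $k_i$. This is arranged using projectivity of $\gal(\calk_i)$ together with the first-order axiomatizability of the relevant Galois-theoretic properties in the complete inverse system of $\gal(\calk_i)$ and {\L}o\'s' theorem; independence of $\mu_V$ from the choice of representatives $(\varphi_i)$ is automatic from the $\cald$-ultralimit construction.
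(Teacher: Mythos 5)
Your proposal matches the paper's own proof: case (a) is exactly the Section 3 construction, and cases (b) and (c) are handled by the same ultraproduct decompositions (Corollary \ref{bounded PAC cor} and its Frobenius analogue after Lemma \ref{pseudo small}) followed by a $\cald$-ultralimit of the component measures. The only difference is cosmetic: the paper first reduces all definable subsets of $V(\calk)$ to test formulas via Lemmas \ref{lem:test} and \ref{test:Frob} and works with elementary equivalence over $k$ ($\equiv_k$), which is how it dispatches the base-field and parameter-transfer bookkeeping that you address more informally in your final paragraph.
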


\begin{proof} In all three cases, we know that definable subsets of
  $V(\calk)$ are defined by test formulas (see Lemmas \ref{lem:test} and
  \ref{test:Frob}).
  Case (a) was already done in Section 3.   %% \red{I.e., if $a$ is a generic of $V$ over $k$, and
%%   $\theta(x)$ is an $\call(k)$-formula, there are
%%   a finite Galois extension $L$ of $k$, and finitely many subfields
%%   $K_1,\ldots,K_r$ of $L$ containing $k(a)$ and regular over $k$, such
%%   that $a$ satisfies $\theta$ in some $\calk'\equiv_k\calk$ if and only
%%   if $L\cap \calk'\simeq_{k(a)} K_i$ for some $i$. Let
%%   $X=\{F\in\cals(L/k(a))\mid F\simeq_{k(a)}K_i, \hbox{ for some
%%   }i=1,\ldots,r\}$.

%% In case (a), going if necessary to a finite extension of $L$, we may
%% assume that $\gal(k)$ is the Frattini cover of $\gal(k^s\cap L/k)$, and
%% then we set $$\mu_V(\theta)=\mu^\infty_{L/k(a)}(X).$$}
In cases (b) and (c), we know that
$\calk\equiv_k\prod_{i\in\omega}\calk_i/\cald$, where each $\calk_i$ is
perfect PAC, with $\gal(\calk_i)$ 
the {universal} Frattini cover of some finite group $G_i$, and again, we may assume
that $G_i$ projects onto $\gal(k^s\cap L/k)$. Then
$$\mu_V(\theta)=\lim_\cald \mu_{V,i}(X)$$
where $\mu_{V,i}(X)$ is computed in $\calk_i$.
\end{proof}

\begin{prop} \vlabel{efree} Let $k$ be an $e$-free perfect PAC field ($e\in\nat{^>0}$), $V$ an absolutely
  irreducible variety defined over $k$. Then the probability 
  measure $\mu_V$ defined on $V$ in \cite{ChRa} coincides with the
  measure defined in Theorem \ref{thm1}, (see also Corollary \ref{bounded PAC cor}). 
\end{prop}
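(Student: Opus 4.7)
The plan is the following.  Since $k$ is $e$-free PAC, $\gal(k)=\hat F_e$ has the embedding property, so $k$ is a perfect Frobenius field; thus case (c) of Theorem \ref{thm1} applies, and the measure $\mu_V$ is built as an ultralimit $\lim_\cald \mu_{V,i}$ where each $\mu_{V,i}$ is the measure on a perfect PAC field $\calk_i$ with $\gal(\calk_i)=\tilde G_i$ (the universal Frattini cover of a finite group $G_i$), the $\tilde G_i$ being supplied by Lemma \ref{pseudo small}.  By Lemma \ref{test:Frob} and Corollary \ref{types}, both measures are determined by their values on test formulas, so it suffices to compare them on a fixed test formula $\varphi$ over $k$ with adequate finite Galois extension $L/k(a)$ containing $k_L$ and associated set $X^L_\varphi\subseteq\cals(L/k(a))$.

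The first step I would carry out is to recast the \cite{ChRa} measure in the Markov-chain language of Definition \ref{mui}.  The construction in \cite{ChRa} is tailored to the $e$-free case and is built from exactly the same group-theoretic data used in Section 3: an $e$-tuple of lifts of the generators of $\gal(k)$ restricted to $k_L$, together with the associated fixed-field map on $\cals(L/\cdot)$.  Unpacking definitions, it should give
\[
\mu_V^{\mathrm{old}}(\varphi)\;=\;\mu^\infty_{L/k(a)}(X^L_\varphi)
\]
computed intrinsically inside $\hat F_e$.  Note that although Lemma \ref{maximal}(4,5) and the characterization ``maximal $=$ absorbing'' in Lemma \ref{ergodic and maximal} used the universal Frattini hypothesis, the Markov chain itself is defined purely in terms of $\mu^1$ and makes sense for any profinite $\gal(k)$; so the right-hand side above is well-defined and captures the intrinsic ``random free generators'' probability underlying the \cite{ChRa} construction.

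The second step is to identify the ultralimit $\lim_\cald\mu^\infty_{L/k(a)}(X^L_\varphi)$ taken in the $\tilde G_i$-settings with the same quantity taken in the $\hat F_e$-setting.  The state space $\cals(L/k(a))$ is purely field-theoretic and does not depend on the choice of Galois group, while the transition probability $\mu^1_{L/F}(F')$ depends on the lifts $\bar\sigma'$ only through their restriction to $k_L$, i.e.\ through the map $\gal(k)\to\gal(k_L/k)$.  By Lemma \ref{pseudo small} and the way the $\tilde G_i$ are built from embedding covers of the successive finite quotients of $\hat F_e$, for $\cald$-almost all $i$ both $\tilde G_i$ and $\hat F_e$ factor through the same quotient $\gal(k_L/k)$ in a compatible way, and the generators can be chosen to project identically.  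Hence for $\cald$-almost all $i$ the two Markov chains coincide on $\cals(L/k(a))$, and the ultralimit equals $\mu^\infty_{L/k(a)}(X^L_\varphi)$ computed in $\hat F_e$.

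The main obstacle is the first step.  The \cite{ChRa} measure was not defined in the Markov-chain language of Section 3, and some care is needed to match the two descriptions on each $X^L_\varphi$: one must verify that the iterative refinement implicit in \cite{ChRa} (via stabilization over an increasing chain of Galois extensions) lines up exactly with the Markov-chain iteration used here, and that no renormalization is introduced in passing between the two formalisms.  Once this compatibility is established, the remainder of the proof is a direct consequence of the model-theoretic construction of the $\tilde G_i$ in Lemma \ref{pseudo small}.
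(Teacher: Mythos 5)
Your first step is not merely unverified\textemdash it is false, and it is the crux of the whole statement. The measure of \cite{ChRa} is the \emph{one-step} counting measure: it is $\mu^{1}_{L/k(a)}(X^{L}_{\varphi})$ computed with lifts of the $e$ free generators, not $\mu^{\infty}_{L/k(a)}(X^{L}_{\varphi})$. In the $\hat{F}_{e}$-setting these genuinely differ. Take $\varphi(x)$ to be $\exists y\, y^{2}=x$ (characteristic $\neq 2$), $a$ generic in $V=\mathbb{A}^{1}$, and $L=k(a,\sqrt{a})$; then $k_{L}=k$, $\cals(L/k(a))=\{k(a),L\}$, and $L$ is the unique maximal element. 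The \cite{ChRa} measure of $\varphi$ is $2^{-e}$ (the probability that all $e$ lifted generators fix $\sqrt{a}$), whereas the Markov chain run inside $\hat{F}_{e}$ gives $\mu^{\infty}_{L/k(a)}(\{L\})=1$, since $k(a)$ is not absorbing. The conceptual reason is that for $\hat{F}_{e}$ the equivalence ``permissible $=$ maximal'' of Lemma \ref{maximal}(5),(6) fails (it is exactly where the universal Frattini hypothesis is used): non-maximal fields in $\cals(L/k(a))$ do occur as $\calk'\cap L$ and carry positive \cite{ChRa}-measure, while $\mu^{\infty}$ annihilates them. Your second step is likewise off: the Section 3 measures $\mu_{V,i}$ in the approximating fields are computed relative to $k_{1,i}$ (state spaces of fields regular over $k_{i}$, generators restricting nontrivially to $k_{L_{i}}\supseteq k_{1,i}$), so the $\tilde{G}_{i}$-chains are not the $\hat{F}_{e}$-chain on $\cals(L/k(a))$; in the example above $\lim_{\cald}\mu_{V,i}(\varphi)=2^{-e}\neq 1$, so the claimed identification of the ultralimit with the intrinsic $\mu^{\infty}$ also fails.

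The paper's proof therefore cannot stay ``intrinsically inside $\hat{F}_{e}$''; it works through the ultraproduct. It realizes $k\equiv\calk=\prod k_{i}/\cald$ with $\gal(k_{i})=\tilde{G}_{i}$, writes the \cite{ChRa} measure as the one-step count over $L/\calk(a)$, transfers it by {\L}os's Theorem to $\lim_{\cald}\mu^{1}_{L_{i}/k_{i}(a_{i})}(X^{\varphi}_{L_{i}})$, and then proves the key Claim: for $F=\prod F_{i}/\cald$ regular over $\calk$ with $\calk(a)\subseteq F\subseteq L$, $F$ is maximal iff $\gal(L/F)$ is $e$-generated iff $\mu^{1}_{L_{i}/k_{i}(a_{i})}(F_{i})$ is bounded away from $0$ on a $\cald$-large set. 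This is precisely what allows one to replace $\mu^{1}_{L_{i}}$ by $\mu^{\infty}_{L_{i}}$ in the ultralimit, i.e.\ to match the one-step \cite{ChRa} count with the stationary measures defining $\mu_{V}$ in Theorem \ref{thm1}. Nothing in your outline supplies this step, and the identity you propose to prove in its place does not hold.
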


\begin{proof}
Write $\hat F_e=\gal(k)=\varprojlim_i \tilde G_i$, where each $\tilde G_i$ is the
universal Frattini cover of a finite group $G_i$, and for $i\leq j$, we
have an epimorphism $G_j \twoheadrightarrow G_i$, which induces an
epimorphism $\tilde G_j\twoheadrightarrow \tilde G_i$. For each $i$,
consider a perfect PAC field $k_i$ with $\gal(
k_i)\simeq \tilde G_i$. We saw that if $\cald$ is a non-principal
ultrafilter over $\nat$, then $\calk := \prod k_i/\cald\equiv k$. As each
$G_i$ is a quotient of $\hat F_e$, so is $\tilde G_i$, and by
projectivity, $\tilde G_i$ lifts to a closed subgroup $H_i$ of
$\gal(k)$: we may therefore assume that each $k_i$ contains $k$, so
that our variety is defined over each $k_i$.  

Let $a=(a_i)_i$ be a generic of $V$ over $\calk$, where each $a_i$ is a
generic of $V$ over $k_i$. Let  {$\theta(x)$} be a test formula (with
parameters in $\calk$), and let $L$ be a finite Galois extension of $\mathcal{K}(a)$ which is
adequate for {$\theta(a)$}.  Note that, by {\L}os's Theorem and standard facts, there are finite Galois $L_{i}/k_{i}(a_{i})$ with $\prod L_{i}/\mathcal{D} = L$ and $\gal(L_{i}/k_{i}(a_{i})) \cong \gal(L/\mathcal{K}(a))$ for $\mathcal{D}$-almost all $i$. Moreover, if $F$ is a field, regular over $\mathcal{K}$ with $\mathcal{K}(a) \subseteq F \subseteq L$, then there are $F_{i} \in \mathcal{S}(L_{i}/k_{i}(a_{i}))$.  And, conversely, given $F_{i} \in \mathcal{S}(L_{i}/k_{i}(a_{i}))$ for $\mathcal{D}$-almost all $i$, $\prod F_{i}/\mathcal{D}$ satisfies $\mathcal{K}(a) \subseteq F \subseteq L$ and $F$ is regular over $\mathcal{K}$. Assuming that $L$ contains the splitting field of the polynomials in the test sentence $\varphi(a)$, then we have also that $L_{i}$ is adequate for $\varphi(x)$ (for $\mathcal{D}$-almost all $i$) and $F \in X^{\varphi}_{L}$ if and only if $F = \prod F_{i}/\mathcal{D}$ for $F_{i} \in X^{\varphi}_{L_{i}}$ and the cardinality of $X^{L_{i}}_{\varphi}$ agrees with $|X^{L}_{\varphi}|$ on a $\mathcal{D}$-large set.   

\textbf{Claim}:  Suppose $\mathcal{K}(a) \subseteq F \subseteq L$ and $F$ is regular over $\mathcal{K}$.  Then $F$ is maximal with these properties if and only if $\gal(L/F)$ is $e$-generated if and only if, writing $F = \prod F_{i}/\mathcal{D}$ with $F_{i} \in \mathcal{S}(L_{i}/k_{i}(a_{i}))$, $\mu^{1}_{L_{i}/k_{i}(a_{i})}(F_{i}) \geq \epsilon > 0$ for some $\epsilon > 0$ for $\mathcal{D}$-almost all $i$. 

\emph{Proof of Claim}:  Note that if $F$ is maximal and $F = \prod_{i} F_{i}/\mathcal{D}$, then $F_{i}$ is maximal in $\mathcal{S}(L_{i}/k_{i}(a_{i}))$ (else, an ultraproduct of proper extensions $F_{i} \subsetneq F'_{i} \in \mathcal{S}(L_{i}/k_{i}(a_{i}))$ would produce a witness to the non-maximality of $F$). Then $\mu^{1}_{L_{i}/k_{i}(a_{i})}(F_{i}) \geq \frac{1}{[L_{i} : k_{L_{i}}(a_{i})]}$ by Lemma \ref{maximal}(4) and the definition of the measure. As $\gal(L_{i}/k_{i}(a_{i})) \cong \gal(L/\mathcal{K}(a))$ for $\mathcal{D}$-almost all $i$, this shows $\mu^{1}_{L_{i}/k_{i}(a_{i})}(F_{i})$ is bounded away from $0$ on a $\mathcal{D}$-large set. 

Next, suppose $\mu^{1}_{L_{i}/k_{i}(a_{i})}(F_{i})$ is bounded away from $0$ on a $\mathcal{D}$-large set.  Then for each $i$ in this set, $\gal(L_{i}/F_{i})$ is generated by a lift of the generators of $G_{i}$ and therefore is $e$-generated. It follows that $\gal(L/F)$ is $e$-generated, so there is an epimorphism $\hat{F_{e}} \cong \gal(\calk) \to \gal(L/F)$. As in the proof of Lemma \ref{maximal}(5), this entails that the restriction epimorphism $\gal(L/F) \to \gal(k_{L}/k)$ is Frattini so $F$ is maximal.  \qed

In particular, the above claim shows that if $\mathcal{K}(a) \subseteq F \subseteq L$ and $F$ is regular over $\mathcal{K}$ and maximal with these properties, then, writing $F = \prod F_{i}/\mathcal{D}$ with $F_{i} \in \mathcal{S}(L_{i}/k_{i}(a_{i}))$, we have $\mu^{1}_{L_{i}/k_{i}(a_{i}))}(F_{i}) = \mu^{\infty}_{L_{i}/k_{i}(a_{i})}(F_{i})$ for $\mathcal{D}$-almost all $i$.  

Write $\mu^{*}_{V}$ for the measure constructed in \cite{ChRa} and $\overline{\sigma}$ for a choice of a lift of the generators of $\gal(\calk)$ to $\gal(L/\mathcal{K}(a))$ (and, likewise, $\overline{\sigma}_{i}$ for lifts of generators of $\gal(k_{i})$ to $\gal(L_{i}/k_{i}(a))$). Then, we have
\begin{eqnarray*}
    \mu^{*}_{V}(\varphi(x)) &=& \frac{|\{\overline{\tau} \in \gal(L/\mathcal{K}_{L}(a))^{e} : \mathrm{Fix}(\overline{\tau} \circ \overline{\sigma}) \in X^{\varphi}_{L}\}|}{[L : k_{L}(a)]^{e}} \\
    &=& \lim_{\mathcal{D}} \frac{|\{\overline{\tau} \in \gal(L_{i}/k_{L_{i}}(a_{i}))^{e} : \mathrm{Fix}(\overline{\tau} \circ \overline{\sigma_{i}}) \in X^{\varphi}_{L_{i}}\}|}{[L_{i} : k_{L_{i}}(a)]^{e}} \\
    &=& \lim_{\cald} \mu^{1}_{L_{i}/k_{i}(a_{i})}(X^{\varphi}_{L_{i}}) \\
    &=& \lim_{\cald} \mu^{\infty}_{L_{i}/k_{i}(a_{i})}(X^{\varphi}_{L_{i}}) \\
    &=& \mu_{V}(\varphi(x)). 
\end{eqnarray*}
This shows that the measure of \cite{ChRa} agrees with the measure constructed above. \end{proof}

\begin{thm}\vlabel{thm2}
Let $\calk$ be a perfect PAC field, and $k$ a relatively
  algebraically closed subfield of $K$. Let $G$ be a group definable (over
  $k$) in $\calk$. In each of the following cases, $G$ is definably amenable:
  \begin{itemize}
  \item[(a)] $\gal(k)$ is the Frattini cover of some finite group, and
    the restriction map $\gal(\mathcal{K})\to\gal(k)$ is an isomorphism.
  \item[(b)] $\gal(k)$ is small, and the restriction map $\gal(K)\to
    \gal(k)$ is an isomorphism.
  \item[(c)] $\calk$ is perfect Frobenius.
    \end{itemize}
\end{thm}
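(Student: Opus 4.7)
The plan is to combine the measures from Theorem \ref{thm1} with the Hrushovski--Pillay classification of definable groups in bounded perfect PAC fields \cite{HrP}, together with the property (emphasized in the abstract) that the measures $\mu_V$ are invariant under \emph{birational} isomorphisms of $V$, even though they need not be invariant under arbitrary definable bijections. This is the crucial point that converts the non-invariant measures $\mu_V$ into translation-invariant measures on algebraic groups, since translations on an algebraic group are rational, hence birational.

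For cases (a) and (b), $\gal(k)$ is small and the restriction $\gal(\calk) \to \gal(k)$ is an isomorphism, so $\calk$ is bounded. By \cite{HrP}, every $k$-definable group $G$ in $\calk$ is definably isogenous, over $k$, to the $\calk$-points of an algebraic group $H$ defined over $k$. Since definable amenability is preserved by finite-to-one definable homomorphisms (by pushing and pulling back translation-invariant measures along such maps), it suffices to equip $H(\calk)$ with a translation-invariant probability measure on its definable subsets. Writing $H$ as a finite disjoint union $\bigsqcup_i V_i$ of its geometric components (after passing, if necessary, to a finite Galois extension $k'/k$ over which they are absolutely irreducible and defined), Theorem \ref{thm1} supplies a probability measure $\mu_{V_i}$ on each component; averaging these with equal weights produces a probability measure $\nu$ on the definable subsets of $H(\calk)$. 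For every $g \in H(\calk)$, left translation $\lambda_g$ is a $k(g)$-rational map that restricts to birational isomorphisms between components of $H$, and the birational invariance of the $\mu_{V_i}$ then implies that $\nu$ is translation invariant.

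For case (c), I would apply an ultralimit argument paralleling the proof of Theorem \ref{thm1}(c). By the corollary following Lemma \ref{pseudo small}, the perfect Frobenius field $\calk$ is elementarily equivalent to an ultraproduct $\prod_i \calk_i/\cald$, where each $\calk_i$ is a perfect Frobenius field whose absolute Galois group is the universal Frattini cover of a finite group; each such $\calk_i$ is therefore bounded, and case (a)/(b) applies in $\calk_i$. By {\L}os's theorem, $G$ lifts to a family of definable groups $G_i$ in $\calk_i$, each carrying a translation-invariant probability measure $\nu_i$. Setting $\nu(\varphi) = \lim_\cald \nu_i(\varphi_i)$ for each formula $\varphi$ defining a subset of $G$ (with $\varphi_i$ the corresponding formula in $\calk_i$) then gives a well-defined, finitely additive, translation-invariant probability measure on the definable subsets of $G$.

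The main obstacle will be establishing the birational invariance of $\mu_V$ that underpins cases (a) and (b). This requires revisiting the construction in Section 3: one needs to show that if $f : V \dashrightarrow V'$ is a birational $k$-isomorphism sending the generic point $a$ of $V$ to the generic point $a'$ of $V'$, then the induced $k$-isomorphism $k(a) \cong k(a')$ transports each relevant finite Galois extension $L/k(a)$ to an isomorphic finite Galois extension $L'/k(a')$ and hence induces a bijection $\mathcal{S}(L/k(a)) \to \mathcal{S}(L'/k(a'))$ that commutes with the Markov chain transition probabilities $\mu^1_{L/K}$, and therefore also with the limit measures $\mu^\infty_{L/k(a)}$. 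Once this compatibility is in hand, translation invariance of $\nu$ on $H(\calk)$ follows because translation by any $g \in H(\calk)$ is a $k(g)$-birational map on $H$, which suffices for invariance on the Boolean algebra of $k$-definable subsets.
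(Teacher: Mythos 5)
Your overall architecture is the paper's: measures from Theorem \ref{thm1}, the Hrushovski--Pillay classification, translation invariance of the measure on algebraic groups, and an ultralimit for the Frobenius case. In fact the paper does even less in case (a) than you propose: it does not reprove any invariance, but cites \cite[Prop.~3.15]{ChRa} for translation invariance of $\mu_G$ when $G$ is a \emph{connected} algebraic group and \cite[Lemma 3.16]{ChRa} for transferring definable amenability across the \emph{virtual} isogeny provided by Theorem C of \cite{HrP} (a finite-index definable subgroup $G_0\leq G$ and a definable $f:G_0\to H(\calk)$ with finite kernel and finite-index image); and it handles both (b) and (c) by the ultraproduct reduction to case (a) (Corollary \ref{bounded PAC cor}), whereas you treat (b) directly via \cite{HrP} on the bounded field itself, which is legitimate. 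Your case (c) is exactly the paper's argument.

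Two steps, as you state them, have genuine problems. First, the decomposition of $H$ into geometric components with equal-weight averaging: since $k$ is relatively algebraically closed in $\calk$, no proper finite extension $k'$ of $k$ embeds into $\calk$, so the non-identity components, which in general are only defined over such a $k'$, are not absolutely irreducible varieties over a subfield of $\calk$ to which the Section 3 construction (which needs the base inside $\calk$ with the prescribed Galois condition) applies; some components may even have no $\calk$-points. The paper sidesteps this by working with connected algebraic groups and absorbing all finite-index and finite-kernel phenomena into the isogeny transfer lemma, rather than by averaging over components. Second, and more seriously, your concluding invariance claim is only for $k$-definable subsets. Definable amenability requires a left-invariant measure on all definable subsets of $G$ with parameters from $\calk$, and translation by $g\in H(\calk)$ takes a $k$-definable set outside the $k$-definable Boolean algebra, so invariance there is not even a well-posed substitute. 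One must define $\mu_V$ on sets with arbitrary parameters and prove invariance at that level, which requires comparing the construction over $k$ with the construction over a larger relatively algebraically closed base containing $g$, not merely transporting the Markov-chain data along a $k$-birational map; this is precisely the content of \cite[Prop.~3.15]{ChRa} that the paper quotes. With those two repairs your proposal coincides with the intended proof.
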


\begin{proof} Case (a):  Reason as in \cite{ChRa}: we already know that if $G$ is
  a (connected) algebraic group, then $\mu_G$ is stable under
  translation (\cite[Prop.~3.15]{ChRa}), so that $\gal(\calk)$ is definably amenable. Further, by Theorem C in
  \cite{HrP}, an arbitrary group $G$ definable in $\calk$ is virtually
  isogenous to an algebraic group: there are an algebraic group $H$
  defined over $k$, a definable subgroup $G_0$
  of $G$ of
  finite index, and a definable homomorphism $f:G_0\to H(\calk)$ with
  finite kernel, and with $f(G_0)$ of finite index in $H(\calk)$. The
definable  amenability of $H(\calk)$ then implies easily the definable amenability of $G$
  (see  \cite[Lemma 3.16]{ChRa}).

In cases (b) and (c), we know that
$\calk\equiv_k\prod_{i\in\omega}\calk_i/\cald$, with $\calk_i$ as in
(a). Thus, if $G$ is defined by $\varphi(x)$ in $\calk$, the formula
$\varphi(x)$ defines a group $G_i$ in (almost all in the sense of
$\cald$) $\calk_i$, and each $G_i$ is definably amenable; hence so is $G$.
\end{proof}
 
  \begin{rem} \label{rem: no classification}
More broadly, our results imply that groups definable in a perfect PAC
field that is elementarily equivalent to an ultraproduct of perfect PAC fields whose Galois groups are the universal
Frattini covers of finite groups, will be definably amenable.  Such a
PAC field can be equivalently described as one whose Galois group is
elementarily equivalent to an ultraproduct of such groups in the inverse
system language.  We have shown that this class of fields contains both the bounded perfect PAC fields and the perfect Frobenius fields, but it also contains more contrived examples. It is easily seen that the CDM graph coding \cite[chapter
28, \S\S 7 -- 10]{FJ} takes pseudo-finite graphs to such groups.  Additionally, every pseudo-finite
structure in a finite language is bi-interpretable with a pseudo-finite
graph. This suggests there is no nice classification for this class of PAC fields.  
  \end{rem}

%\para{\bf An intriguing question} 

\subsection*{Fields with an action of a finite group}

Suppose $G$ is a finite group.  The language of $G$-fields consists of
the language of rings together with unary function symbols $\sigma_{g}$
for each element $g \in G$. The theory $T_{0}$ is the theory of integral
domains with an action of $G$.  Hoffmann and Kowalski (\cite[Theorem 2]{HK}) show that $T_{0}$ has a model companion $G$-TCF, whose models are existentially closed fields with an action of $G$; we denote models of $G$-TCF by $(K,\overline{\sigma})$ where $K$ is the underlying field and $\overline{\sigma} = (\sigma_{g}^{K})_{g \in G}$ is the tuple of automorphisms of $K$ that give the interpretations of the symbols $\sigma_{g}$ for each $g \in G$. We write $K^{G}$ for the fixed field. 

\begin{fact}\vlabel{G-TCF}
    Let $(K,\overline{\sigma}) \models G\hbox{-TCF}$ and let $F = K^{G}$. Then we have the following:
    \begin{enumerate}
        \item %% After 
Up to adding finitely many constants for elements of $F$ and of $K$,   %%        adding constants for elements of $F$,
          $\mathrm{Th}(F)$ (in the language of rings) is
          bi-interpretable with
          $\mathrm{Th}(K,\overline{\sigma})$. \cite[Remark 2.3]{HK} 
        \item The field $F$ is perfect, PAC, and the absolute Galois group $\gal(F)$ is the universal Frattini cover of $G$. \cite[Theorem 3.40]{HK} 
    \end{enumerate}
\end{fact}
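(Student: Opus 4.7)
The plan is to prove part (2) first, then derive part (1) almost immediately. Throughout, I would exploit existential closedness of $(K,\overline{\sigma})$: any existential sentence over $K$ true in some $G$-field extension is already true in $K$.

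For part (2), I would proceed by four short steps. First, show that the $G$-action on $K$ is faithful: using the $G$-field extension of $K$ obtained by adjoining indeterminates $t_h$ ($h \in G$) with the regular action $\sigma_g(t_h) = t_{gh}$, existential closedness gives, for any nontrivial $g$, an element $x \in K$ with $\sigma_g(x) \neq x$. Then $F = K^G$ yields $[K:F]=|G|$ and $\gal(K/F)=G$ by Artin's theorem. Second, show $F$ is perfect: in characteristic $p$, for $a \in F$, extend the $G$-structure to $K(a^{1/p})$ with $a^{1/p}$ fixed (well-defined since $\sigma_g(a)=a$ and $p$-th roots in characteristic $p$ are unique), and apply existential closedness to produce a $G$-fixed $p$-th root of $a$, which necessarily lies in $F$. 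Third, show $F$ is PAC: for an absolutely irreducible variety $V/F$ with generic point $v$ of $V_K$ over $K$, extend the $G$-action to $L = K(v)$ by fixing $v$ pointwise; this is well-defined because $V$ is cut out by $F$-polynomials, hence $G$-invariant. Then $v \in V(L) \cap L^G$, and existential closedness yields $V(F) \neq \emptyset$.

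The fourth and most substantial step is identifying $\gal(F) \cong \tilde G$. Since $F$ is PAC, $\gal(F)$ is projective by Fact \ref{fact1}(2), so by the uniqueness of projective Frattini covers of $G$ it is enough to show that the restriction $\gal(F) \twoheadrightarrow \gal(K/F)=G$ is Frattini, equivalently, that no nontrivial finite Galois extension $F'/F$ satisfies $F' \cap K = F$. Given such an $F'$, pick $\alpha \in F' \setminus F$ with minimal polynomial $p(x) \in F[x]$ of degree $>1$. Linear disjointness of $F'$ and $K$ over $F$ lets the $G$-action extend to $L = F'K$ by fixing $F'$ pointwise, giving a $G$-field extension $(L,\overline{\sigma}')\supseteq (K,\overline{\sigma})$. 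The existential sentence $\exists y\,\bigl(p(y)=0 \,\wedge\, \bigwedge_{g \in G} \sigma_g(y)=y\bigr)$ is witnessed by $\alpha$ in $L$, hence by existential closedness is witnessed in $K$; any witness lies in $F$, contradicting irreducibility of $p$.

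Part (1) is then essentially formal: $F$ is quantifier-freely definable in $(K,\overline{\sigma})$ as the joint fixed set of the $\sigma_g$; conversely, a primitive element $\alpha$ for $K/F$ identifies $K$ with $F[x]/(p(x))$, where $p(x) \in F[x]$ is its minimal polynomial, and each $\sigma_g$ is recovered from a polynomial $q_g(x) \in F[x]$ with $\sigma_g(\alpha)=q_g(\alpha)$. Adding finitely many constants (for the coefficients of $p$ and the $q_g$, and a constant for $\alpha$ on the $K$ side) completes the bi-interpretation. The main obstacle throughout is step four of part (2): the insight that linear disjointness with $K$ corresponds exactly to failure of the Frattini condition, and that existential closedness can extract a $G$-fixed root as a concrete witness, is the crux that makes the whole identification go through.
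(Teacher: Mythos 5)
The paper itself does not prove this Fact---it is quoted from Hoffmann--Kowalski---so the only issue is whether your direct argument is sound. Most of it is: the faithfulness, perfectness and PAC steps, and the final existential-closedness trick, are the standard arguments and are correct, as is the reduction of step four to showing that $\mathrm{res}:\gal(F)\twoheadrightarrow\gal(K/F)=G$ is a Frattini cover (projectivity of $\gal(F)$ plus uniqueness of projective Frattini covers). The gap is in how you test the Frattini condition. Being Frattini means that \emph{every} proper open subgroup of $\gal(F)$ fails to surject onto $G$, i.e.\ every finite extension $F''\supsetneq F$ satisfies $F''\cap K\supsetneq F$ --- not only the Galois ones. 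Your claimed equivalence with ``no nontrivial finite \emph{Galois} extension $F'/F$ has $F'\cap K=F$'' only inspects open normal subgroups, and that condition is strictly weaker in general: for the sign map $S_{3}\to\mathbb{Z}/2\mathbb{Z}$ no proper normal subgroup surjects, yet the order-two subgroups do, so the map is not Frattini. As written, you verify a condition that does not imply what you need, and your argument for it genuinely uses Galoisness of $F'$ (to get linear disjointness of $F'$ and $K$ over $F$), so it does not automatically cover the non-normal case.

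Fortunately the repair needs no new idea: the linear disjointness you want follows from the fact that $K/F$ is Galois (your step one plus Artin), not from normality of $F'$. If $F''=F(\alpha)$ is any finite proper extension with $F''\cap K=F$ (all algebraic extensions of $F$ are separable since $F$ is perfect, and it suffices to consider open subgroups because any proper closed subgroup lies in a proper open one), then $[F''K:K]=[F'':F''\cap K]$ gives that $F''$ and $K$ are linearly disjoint over $F$; so form $L\cong F''\otimes_{F}K$, extend each $\sigma_{g}$ as $\mathrm{id}\otimes\sigma_{g}$, and existential closedness produces a $G$-fixed root in $K$ of the minimal polynomial of $\alpha$, i.e.\ a root in $F$, contradicting irreducibility. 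With this one-line change step four is correct, and your sketch of part (1) is the standard bi-interpretation and is fine.
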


As before, we get as corollaries the following results:

\begin{thm} Let $(\calk,\overline{\sigma}) \models G\hbox{-TCF}$, and
  let $k$ be a relatively algebraically closed subfield of $\calk$. Let
  $V$ be a variety defined over $k$. Then there is a probability measure
  $\mu_V$ on definable subsets of $V(\calk)$.

  If $H$ is a group definable in $\calk$, then $H$ is definably
  amenable. 
  
\end{thm}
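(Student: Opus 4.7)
The plan is to deduce everything from case (a) of Theorems \ref{thm1} and \ref{thm2} applied to the fixed field $F = \calk^G$, via the bi-interpretation of Fact \ref{G-TCF}(1). By Fact \ref{G-TCF}(2), $F$ is a perfect PAC field and $\gal(F)$ is the universal Frattini cover of the finite group $G$; thus, taking as the distinguished subfield either $F$ itself or any suitable relatively algebraically closed subfield of $F$ with isomorphic absolute Galois group, Theorems \ref{thm1}(a) and \ref{thm2}(a) apply in $F$ to furnish measures on definable subsets of varieties over $F$ and definable amenability of groups definable in $F$.

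Next I would set up the bi-interpretation. After adjoining the finitely many constants from $F$ and $\calk$ named in Fact \ref{G-TCF}(1), a normal basis for $\calk/F$ identifies $\calk$ with $F^{|G|}$ as an $F$-vector space with ring operations and the $G$-action given by polynomials over $F$. Under this identification, the variety $V\subseteq \calk^n$ defined over $k$ becomes a Zariski-closed set $\tilde V\subseteq F^{n|G|}$ defined over a finite extension $F_{*}\subseteq F$ of $k\cap F$, and every $\{+,-,\cdot,0,1,(\sigma_g)_{g\in G}\}(k)$-definable subset $X\subseteq V(\calk)^m$ corresponds to a ring-theoretically definable subset $\tilde X\subseteq F^{mn|G|}$ over parameters in $F$.

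To construct $\mu_V$, I would decompose $\tilde V$ into its absolutely irreducible components over a suitable finite extension inside $F$ (or, equivalently, embed $\tilde V$ in a large affine space $\mathbb{A}^N$ over $F$) and apply Theorem \ref{thm1}(a) to obtain a probability measure $\mu_{\tilde V}$ on definable subsets of $\tilde V(F)$. Setting $\mu_V(X) := \mu_{\tilde V}(\tilde X)$ gives a well-defined probability measure on definable subsets of $V(\calk)$, because the bi-interpretation induces a bijection of the Boolean algebras of definable sets. For the definable amenability of a group $H$ definable in $(\calk,\overline\sigma)$, the bi-interpretation produces a corresponding interpretable group $\tilde H$ in $F$, which is definably amenable by Theorem \ref{thm2}(a); pulling the translation-invariant measure on $\tilde H$ back along the bi-interpretation yields a translation-invariant measure on definable subsets of $H$, as required.

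The main technical obstacle is bookkeeping the parameters and Galois data through the bi-interpretation, in particular verifying that the subfield of $F$ over which $\tilde V$ is defined can be chosen so that the hypothesis of Theorem \ref{thm1}(a) (relative algebraic closedness and the isomorphism $\gal(F)\to\gal(F_{*})$) is satisfied. This is mild, because one can always take $F_{*}=F$ itself, or enlarge $k$ slightly so that $k\cap F$ is relatively algebraically closed in $F$, noting that if $\alpha\in F$ is algebraic over $k\cap F$ then $\alpha\in k^{\mathrm{alg}}\cap \calk=k$, hence $\alpha\in k\cap F$. With this housekeeping in hand, both conclusions follow directly from the already established case (a).
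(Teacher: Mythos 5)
Your proposal is correct and follows the same route as the paper, whose proof is simply ``Clear by Fact \ref{G-TCF} and by Theorems \ref{thm1} and \ref{thm2}'': transfer everything to the fixed field $F=\calk^{G}$ via the bi-interpretation of Fact \ref{G-TCF}(1), note that Fact \ref{G-TCF}(2) puts $F$ in case (a), and pull the measures and invariant measures back. Your extra bookkeeping (normal basis, parameters, relative algebraic closedness) is exactly the routine verification the paper leaves implicit.
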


\begin{proof} Clear by Fact \ref{G-TCF} and by Theorems \ref{thm1} and \ref{thm2}.\end{proof}

\subsection*{\bf Concluding Remarks and Questions}

(1) Suppose
$\calk$ is perfect PAC with $\gal(\calk)$ $e$-free pro-$p$, $e\in\mathbb{N}$, let $V$ be a variety
defined over $k\subset\calk$. Then $\gal(\calk)$ is the universal
Frattini cover of $(\mathbb{Z}/p\mathbb{Z})^e$. Does the measure $\mu_V$ computed in Theorem \ref{thm1} coincide with the measure we defined in \cite[Section 4.15]{ChRa}? \\

(2) We know by results in \cite{ChRa} that if $\calk$ is $\omega$-free perfect,
and $V$ is a variety defined over $\calk$, then $\mu_V$ only takes the
values $\{0,1\}$. I.e., the only type of $V$ over some relatively
algebraically closed $k\subset \calk$ over which $V$ is defined, and  with non-zero measure is
the type of an element $a$ generic of $V$ over $k$, and such that
$k(a)^s\cap \calk=k(a)$.

Describe $\mu_V$ when $\calk$ is an arbitrary perfect Frobenius field
(of infinite rank). Which
values can it take? What about definable subgroups of $G(\calk)$, for
$G$ a connected algebraic group? (In the case of perfect $\omega$-free
fields, we know that proper subgroups have measure $0$).\\

(3) What are the perfect PAC fields for which definable groups are
definably amenable? The $G$-TCF example (and our method for defining the
measure) suggests that they have to be
definable in a perfect PAC field in which formulas are equivalent to
test formulas.  {See also Remark \ref{rem: no classification}}.\\

{(4) If $k$ is a bounded perfect PAC field and $V$ is an absolutely
  irreducible variety defined over $k$, must the measure $\mu_{V}$ take
  on only rational values?  We showed, in Proposition \ref{rational},
  that the answer is yes when $\mathrm{Gal}(k)$ is the universal
  Frattini cover of a finite group, or when $\gal(k)$ is free, or
  $e$-free pro-$p$.}

\begin{exmp}\vlabel{rational-2} Here is an example, of a bounded perfect
  PAC field, where the measures $\mu_V$ only take rational values, and  {whose absolute Galois group}
  is not finitely generated. Let
  $(\pi_i)_{i\in\nat}$ be an infinite set of disjoint finite sets of
  prime numbers, and for each $i\in\nat$, let $e_i$ be a positive integer, and
  $G_i$ an $e_i$-generated projective pro-$\pi_i$-group (i.e, the order of
  any finite quotient of $G_i$ is only divisible by elements of
  $\pi_i$). If $\calk$ is a perfect PAC field with $\gal(\calk)\simeq
  \prod_{i\in\nat}G_i$, then $\gal(\calk)$ is projective (since the
  orders of elements of distinct $G_i$ are relatively prime),  and bounded; moreover,
  if $\theta(x,y)$ is an $\call(k)$-formula (where $k\prec\calk$), and $L$
 is a finite Galois extension of
  $k(a)$, there is some $s$ such that whenever $k(a)\subset F\subset L$
 is regular over $k$, then $\gal(L/F)$ is a quotient
  of $\prod_{j\leq s}G_j$; thus $\gal(L/F)$ is finitely generated (by
  $\leq e=\sup_{j\leq s}e_j$ elements), and
  as the computation of $\mu^\infty_{L/k(a)}(K_i)$ only depends on its
  computation within $\prod_{j\leq s}G_j$ (because there is no
  non-trivial morphism $\prod_{j>s}G_j$ to $\gal(L/F)$ for $F$ as
  above), we get that whenever $k(a)\subset F\subset L$ is permissible, then $\mu^\infty_{L/k(a)}(F)\in\rat$, by Proposition \ref{rational}.

\end{exmp}

(5) Since it has not appeared in print, we reproduce the proof, due to Tom Scanlon, that bounded perfect PAC fields which are not pseudo-finite are not pseudo-stable (that is, are not elementarily equivalent to an ultraproduct of stable fields). Let $K$ be such a field. By Ax's Theorem, we know that $\mathrm{Gal}(K) \not\cong \hat{\mathbb{Z}}$ so there must be some $\ell$ such that $K$ has either no Galois extensions of degree $\ell$ or at least $2$ Galois extensions of degree $\ell$. Note that if a finite extension of $K$ is not pseudo-stable then $K$ cannot be either. Thus, possibly replacing $K$ with a finite algebraic extension, we may assume there is some $n$ such that the map $x \mapsto x^{n}$ is not surjective. Let $m$ denote the index of $K^{\times n}$ in $K^{\times}$. Then we may write a sentence $\varphi$ which consists of the axioms of fields, together with the assertions that $[K^{\times} : K^{\times n}] = m$ and $K$ has no extensions of degree $\ell$ or at least $2$ extensions of degree $\ell$.  Any field that satisfies $\varphi$ must be infinite, since any finite field has a unique degree $\ell$ extension.  However, any infinite field satisfying $\varphi$ cannot be stable, since the multiplicative group of an infinite stable field is connected \cite[Th\'eor\`eme 3]{P}.

\subsection*{Acknowledgements}

We would like to thank Igor Pak for helpful correspondence on Markov chains.

\vskip 1cm
\end{document}